\newtheorem{theorem}{Theorem}[section]
\newtheorem{lemma}[theorem]{Lemma}
\newtheorem{corollary}[theorem]{Corollary}
\theoremstyle{definition}
\newtheorem{proposition}[theorem]{Proposition}
\theoremstyle{remark}
\newtheorem{remark}[theorem]{Remark}
\numberwithin{equation}{section}
\def\R{\mathbb{R}}
\newcommand{\p}{\partial}
\newcommand{\wt}{\widetilde}
\newcommand{\BFR}{\mathbb{R}}
\newcommand{\ep}{\epsilon}
\newcommand{\I}{\mathbf{I}}
\newcommand{\be}{\begin{equation} }
	\newcommand{\ee}{\end{equation}}
\newcommand{\bse}{\begin{subequations}}
	\newcommand{\ese}{\end{subequations}}
\title[Boundary regularity of vortex patch]{ Boundary regularity of uniformly rotating vortex patches and an unstable elliptic
	 free boundary problem} 
\author{Yuchen Wang$^{1}$, Guanghui Zhang$^{2}$ and Maolin Zhou$^{3}$}
\address{ ${}^1$ School of Mathematics Science, Tianjin Normal University, Tianjin,  300387, China}
\email{wangyuchen@mail.nankai.edu.cn}
\address{${}^2$ School of Mathematics and Statistics, Huazhong University of Science and Technology, Wuhan, 430074, China}
\email{guanghuizhang@hust.edu.cn}
\address{${}^3$ Chern Institute of Mathematics and LPMC, Nankai University, Tianjin, 300071, China}
\email{zhouml123@nankai.edu.cn}
\thanks{${}^3$ partially supported by  National Key Research and Development Program of China (2021YFA1002400), Scientific Research Innovation Capability Support Project for Young Faculty（SRICSPYF-ZY2025172), NSF of China (12271437, 11971498), the Fundamental Research Funds for the Central Universities (Nankai University 63241642), Tianjin Natural Science Foundation Outstanding Youth Project (23JCJQJC00190) and Nankai Zhide Foundation.}
\thanks{${}^{1}$ partially supported by DFG ZH 605/1-2 and NSF of China (No.11831009,12471110). 
}
\thanks{${}^2$ partially supported by NSF of China (No. 12171176, 11971187). }
\date{\today}
\begin{document}
	
\begin{abstract}
{In this paper, we consider the sign-changing free boundary problem related to the uniformly rotating vortex patch solutions of the two-dimensional incompressible Euler equations. We prove that the boundary of the vortex patch locally forms a $90^\circ$ corner near singular boundary points, if the patch is Lipschitz.}
\\

{\bf Keywords:} Vortex patches; Free boundary problem; Weiss-type monotonicity formula; Regularity of free boundary; Singularity 		
\end{abstract}

\maketitle

\section{Introduction} \label{S:Intro}

Consider the {two-dimensional} incompressible Euler equations {on the plane}
\be \label{E:Euler-E}
\begin{cases}
	\p_t v+(v \cdot \nabla) v = - \nabla p,  \quad x \in \mathbb{R}^2, \\
	\nabla \cdot v = 0, \\
v \rightarrow 0, \quad |x| \to \infty,
\end{cases}
\ee 
where $v=(v^1,v^2)^t \in \mathbb{R}^2$ is the {velocity of the fluid and $p$ its pressure.Throughout this paper, we shall mainly work with the vorticity formulation of the two-dimensional incompressible Euler equations
\be \label{E:Euler-V} 
\begin{cases}
	\p_t \omega + (v \cdot \nabla) \omega = 0,& \quad x \in \BFR^2, \\
	v = -\nabla^{\perp}(-\Delta)^{-1} \omega, & \quad (a_1,a_2)^\perp := (-a_2,a_1)^t
\end{cases}
\ee 
with the vorticity field given by $\omega(t,x):= \p_1 v^2 - \p_2 v^1$, in which the velocity is uniquely determined by the vorticity due to the {\it stream-vorticity} formula.} Equation \eqref{E:Euler-V} is clearly a closed {PDE} system concerning $\omega$ only. {The global well-posedness of smooth solutions is well-established, as well as the Yudovich-type weak solutions $\omega \in L^1(\BFR^2) \cap L^\infty(\BFR^2)$}. {For necessary background and a review on the classical results concerning the Euler equations, we refer to \cite{MP1994, Yud1963} for interested readers.} 

{Particular interest} focuses on the vortex patch solutions of \eqref{E:Euler-V} in the form of
\be
\omega(t,x) = \sum_{j=1}^N \kappa_j \I_{D_j(t)}, 
\ee
where {$N$ is a positive integer, each $\kappa_1,\ldots,\kappa_N$ is a non-zero constant,} $D_1,\ldots,D_N$ are disjoint bounded domains and $\I_D$ denotes the characteristic function of the domain $D$.
The global existence and uniqueness of vortex patch solutions follow from \cite{Yud1963} straightforwardly. The global regularity of vortex patches was first proved by Chemin  \cite{Che1993} provided $\p D \in C^{k,\alpha}, k \geq 1,0<\alpha<1$, see other proofs given by Bertozzi and Constantin \cite{BC94}, and Serfati \cite{Serfati1994}. Recently, Kiselev and Luo \cite{KisLuo2023} established the global regularity of the patch if the initial data are of Sobolev classes.

On the other hand, Chemin \cite{Chemin1998} studied the {preserving of singularities on the boundary of an evolving patch, see also \cite{Elgindi2023} where Elgindi and Jeong considered the well/ill-posedness of specific singular structures.} The global regularity of the patch solution of a {two-dimensional} generalized transport equation was proved by Verdera in \cite{Verdera2023}.

Kiselev and Sverak \cite{Kis2014} studied the small-scale formation of the two-dimensional incompressible Euler equations, namely the infinite time blow-up solutions which satisfy $\|\nabla \omega(t)\|_{\infty} \sim C_1e^{e^{C_2t}}$. {Furthermore, Kiselev and Li \cite{KisLi2019} proved that an analogous phenomenon that the curvature of the boundary tends to infinity happens for vortex patches.} There is a long-standing conjecture that evolving vortex patches would eventually weakly converge to some steady vortex patches. This conjecture seems beyond the current capability of PDEs, but also indicates that the singular uniformly rotating vortex patches may occur as the global attractors in the class of vortex patch solutions. \\

Our main interest will focus on steady vortex patches. Being aware of the rotation and translation invariance of the system, our consideration mainly address the uniformly rotating vortex patches around the origin 
\[
\omega(t,x) = \omega_0(e^{-i\Omega t}x), 
\]
which are relative equilibria of the two-dimensional incompressible Euler equations, i.e., the vorticity configuration is invariant up to rigid motions, where $\Omega \in \BFR$ is the angular velocity and $\omega_0=\I_{D}$ is the initial patch. This setting holds due to the rotational invariance of $\mathbb{R}^2$ and conservation of the center of vorticity. {By the incompressible condition, there exists a scalar function $\Psi$}, referred to as the stream function, such that $v = -\nabla^\perp \Psi$. Then the uniformly rotating vortex patch is determined by a nonlinear elliptic problem concerning the relative stream function
{$\psi(x) = \Psi(x) + \frac{\Omega}{2}|x|^2$}. 

{We call $P \in \BFR^2$ is a stagnation point of the fluid if $v(P) =0$, or equivalently $\nabla \psi(P) =0$ in the rotating coordinate with the angular velocity $\Omega$}. In general $\psi$ is called the relative stream function, but we reserve the name stream function instead throughout this paper for simplicity. \\

Suppose $D$ is a bounded domain enclosed by a rectifiable Jordan curve.
It is not hard to verify that $\omega=\I_D$ is a uniformly rotating vortex patch if and only if $\psi$ solves the elliptic free boundary problem
\be \label{E:VP}
\begin{cases}
	- \Delta \psi = \I_D - 2 \Omega, \quad & x \in \BFR^2, \\ 
	\nabla(\psi - \frac{\Omega}{2}|x|^2) \rightarrow 0, \quad & |x| \rightarrow \infty, \\
	\psi = 0, \quad & x \in \p D,
\end{cases}
\ee
in which the vortical domain $D$ is the main unknown. 

{Denoted by $B_1$ the unit disk.} Clearly {the} circular patch $\omega = \I_{B_1}$ (the Rankine vortex) is a uniformly rotating vortex patch solution for any $\Omega \in \BFR$. Another explicit non-trivial uniformly rotating vortex patch is the Kirchhoff elliptic patch  
\[
\omega = \I_{E_{a,b}}, \quad E_{a,b}:=\left\{ (x_1,x_2) \in \BFR^2 \big|\; \frac{x_1^2}{a^2} + \frac{x_2^2}{b^2} \leq 1\;\right\}
\]
where the angular velocity satisfies $0< \Omega = \frac{ab}{(a+b)^2} \leq \frac{1}{4}$. 

We note that the vortical domains may have a complex topology. When the domain $D$ is not simply-connected, it is sufficient to slightly modify the equation \eqref{E:VP} to  
\be \label{E:VP-M}
\begin{cases}
	- \Delta \psi = \I_D - 2 \Omega, \quad & x \in \BFR^2, \\ 
	\nabla(\psi - \frac{\Omega}{2}|x|^2) \rightarrow 0, \quad & |x| \rightarrow \infty, \\
	\psi = c_j, \quad & x \in \Gamma_j,
\end{cases}
\ee
in which each closed curve $\Gamma_j$ is a connected component of $\p D$ and $c_j$ is a constant, $1 \leq j \leq n$, where $n$ is the number of connected-components of $\partial D$. Clearly the annular patches 
\[
\omega= \I_{C_{b,1}}, \; C_{b,1}:=\{ x \in \BFR^2 |\;\; b \leq |x| \leq 1\}
\]
are trivial solutions of \eqref{E:VP-M} holding for any $\Omega \in \BFR$ and {$b \in (0,1)$}. Beyond these explicit solutions, lots of uniformly rotating vortex patches are obtained via the local bifurcation approach \cite{Burbea1982, Hmidi2013,Hmidi2016a, Hoz2016a, Hmidi2016,Castro2016}. All of these solutions are sufficiently close to the explicit solutions, such as the circular/annular/{elliptic} vortex patches, in the sense of boundary perturbation.

Furthermore, Hassainia, Masmoudi, and Wheeler \cite{HMW2020} studied the global continuation of the $m$-fold local bifurcation curve $\mathscr{S}_m$ {parameterized by $s \in \BFR$}, emanating from the Rankine vortex for $m \geq 3$. They show that the end of bifurcation curves has the following alternatives:
\begin{enumerate}
	\item Smooth vortex patch with overhanging profiles, i.e., the boundary can not be parameterized as a graph in the polar coordinates.   
	\item  Limiting $V$-states, i.e., singular vortex patch where the boundary is parameterized as a non-$C^1$ graph in the polar coordinates.
\end{enumerate}
Together with the numerical observations given in \cite{Wu1984,Overman1986}, they made the following conjecture.\\

\noindent {\bf Conjecture:} {\bf The singular solution with $90^\circ$ corners seen in numerics for $m \geq 3$ exists as the (weak)  limits of patches along the $m$-fold branch $\mathscr{S}_m$ as $s \to \infty$}. \\

This conjecture is highly challenging since the nature of the problem essentially depends on the global geometric property of the special solutions emanating from the disk, which have not been obtained explicitly yet.  Some recent progress on the quantitative properties of the $m$-fold rotating vortex patch has been made by Park in \cite{Park2020} for sufficiently large $m$ via a variational argument based on the optimal transport. However, it seems not easy to gain uniform estimates on the higher-order derivatives of the graph function.

On the other hand, inspired by the remarkable progress on the study of the unstable obstacle problem \cite{ASW12, MG07}, it is an interesting problem to employ the techniques developed by the community of the free boundary problem to study the boundary regularity of uniformly rotating vortex patches, which motivates the present paper.  Since our approach follows a flavor of variation, the vortex patches we studied are not necessarily located on the bifurcation curves emanating from the unit disk.
 \\

Our consideration is based on the observation that the uniformly rotating vortex patch gives rise to the following elliptic unstable free boundary problem
\be \label{E:Sign-ch}
\begin{cases}
	-\Delta u  =  \lambda_{1}\mathbf{I}_{D} - \lambda_2 \mathbf{I}_{D^c} & \quad \text{ in } \mathbb{R}^2, \\
	u  =0 & \quad \text{ on } \p D.
\end{cases}
\ee
in which $\lambda_1,\lambda_2>0$ are prescribed constants and $D$ is an unknown bounded domain. By the maximum principle, clearly $u$ is positive in the domain $D$, while the sign of the solution $u$ may change in the complementary domain $D^c$. It is worth pointing out that equation \eqref{E:Sign-ch} indeed describes the simply-connected rotating vortex patch, or vortex patch solutions of \eqref{E:Euler-V} consist of simply-connected components. However, since our analysis is indeed local, namely we focus on the singular points on a connected component $\Gamma \subset \p D$, thus we can add a constant such that the stream function $u=0$ on $\Gamma$, then the conclusion also holds for non-simply vortical domains.

We shall focus on the regularity of the free boundary $\p D \subset \{u=0\}$ via tools developed in the study of obstacle-type problems since the pioneer work of Caffarelli \cite{Caf1977}, in particular, the Weiss-type monotonicity formula. See \cite{Figalli2019, Figalli2020} and references therein for more recent advances. To the best of the author's knowledge, there are only a few results on the regularity of uniformly rotating vortex patches except that they are sufficiently close to explicit solutions. In \cite{Hmidi2013} Hmidi, Mateu and Verdera proved that the boundary of uniformly rotating vortex patches on the local branch emanating from the Rankine vortex is $C^\infty$ smooth. The smoothness was improved to real-analytic very soon by Castro, C\'{o}rdoba and G\'{o}mez-Serrano in \cite{Castro2016}.
On the other hand, regarding the unstable obstacle problem, results obtained in \cite{ASW12, MG07} could not be implemented here straightforwardly since the level set $\{u=0\}$ does not completely coincide with the free boundary and behaviors of the stream function are essentially different, nor do the minimal/maximal solutions hardly exist.

Uniformly rotating vortex patch is a special class of the sign-changing free boundary problem if and only if $\Omega \in (0,\frac{1}{2})$. Here we mainly focus on the singularity occupying on the Lipschitz boundary of the vortex patch with $\Omega\in(0,\frac{1}{2})$. A domain is called a Lipschitz domain if its boundary is a Lipschitz curve. With the Lipschitz smoothness assumption on the boundary, we can give a complete description of the boundary regularity of uniformly rotating vortex patches.

The main result of this paper is given as follows:
\begin{theorem} \label{T:main-2}
Suppose $D$ is a Lipschitz domain and $\omega = \I_D$ is a uniformly rotating vortex patch with angular velocity $0<\Omega<\frac{1}{2}$. Then the singular set $S\subset\partial D$ contains at most finitely many points, and $\partial D\setminus S$ is smooth. Near each singular point $x\in S$, for sufficiently small $r>0$, $\partial D\cap B(x,r)$ consists of two $C^1$ arcs meeting at the right angle. In particular, if there is no stagnation point on the boundary, the boundary is $C^\infty$-smooth.
\end{theorem}

\begin{remark}
The conclusion also holds for the symmetric rotating vortex pairs by the same argument. It holds regardless of the solutions located on the bifurcation curves. The result recovers numerical observations obtained in \cite{Wu1984}. We expect the analysis to be useful in solving the conjecture on the limiting $V$-states, but a more delicate quantitative analysis on the geometry of the vortical domain seems necessary. 
\end{remark}

While Theorem \ref{T:main-2} suggests that certain singularities are restricted, others might persist in Lipschitz domains when $\Omega \notin (0,\frac{1}{2})$. However, a remarkable rigidity result by Gómes-Serrano, Park, Shi, and Yao \cite{GPSS2021} rules out another type of singularity.

\noindent {\bf Theorem A } { \it Let $D \subset \BFR^2$ be a bounded domain with Lipschitz boundary. Assume that $\omega = \I_D$ is a stationary/uniformly rotating vortex patch of \eqref{E:VP-M} for some $\Omega \in \BFR$. Then $D$ must be radially symmetric if $\Omega \in (-\infty,0] \cup [\frac{1}{2},+\infty)$ and radially symmetric up to a translation if $\Omega=0$. } \\	
	
This result has recently been extended to domains with  Jordan curve boundaries in \cite{FWZ2025}.



We would like to close the introduction with some remarks concerning the existence of singular vortex patches. While plenty of results are obtained, it is also worth pointing out that our consideration here is a priori, i.e., we assume the free boundary problem possesses a solution $(u, D)$.
 Whether or not a singular steady vortex patch really exists remains an open problem up to our best knowledge. Of course, one could apply a variational method to study the existence of rotating vortex patches as \cite{Tur83}, where the existence of uniformly rotating vortex patches could follow from maximizing the energy function penalized by the angular momentum 
\be \label{E:Energy}
E(\omega) = \frac{1}{2} \int_{\BFR^2} \omega (-\Delta)^{-1} \omega d \mu - \frac{\Omega}{2} \int_{\BFR^2} |x|^2 \omega d \mu  
\ee
among the rearrangement class of {prescribed vorticity and angular impulse.} But it seems highly non-trivial to determine the regularity of the boundary except if the vortex patch is highly concentrated. {Indeed, for steady vortex patches in a bounded domain, in \cite{Tur83} Turkington obtained the existence of steady vortex patches and $C^1$ regularity of the boundary of vortical domains when these vortex patches are highly concentrated. He also proposed a conjecture that the singular set on the boundary of a steady vortex patch has Hausdorff dimension zero if these patches are not concentrated.} So, if the conjecture in \cite{HMW2020} holds, it verifies the existence of singular uniformly rotating patches, which may be of independent interest. We also note that Garcia and Haziot \cite{Gar2022} studied the global continuation of the local branch emanating from a pair of corotating point vortices in a very recent work. More limiting $V$-states occur and their geometry is much more complicated if $D_1$ and $D_2$ are allowed to touch each other in several ways. \\

This paper is organized as follows. In Section \ref{S:Obstacle} we consider a sign-changing unstable elliptic free boundary problem and derive the Weiss-type monotonicity formula. The classification of singular points is obtained due to their blow-up limits. In Section \ref{S:Uni-Patch}, we prove the uniform regularity near singular points where $u$ has super-characteristic growth then finish the proof of Theorem \ref{T:main-2}. 


\section{A sign-changing two-phase unstable elliptic free boundary problem: Weiss-type monotonicity formula and classification of blow-up limits} \label{S:Obstacle}

The uniformly rotating vortex patch problem \eqref{E:VP} gives rise to the two-phase unstable elliptic free boundary problem as follows
\be \label{E:FreeBD-E}
\begin{cases}
-\Delta u  = \lambda_1 \I_{D} - \lambda_2 \I_{D^c}, & \quad \text{ in } \BFR^n,\; n \geq 2, \\
\quad \; \; u  = 0, & \quad \text{ on } \p D,
\end{cases}
\ee
where $\lambda_1,\lambda_2>0$ are prescribed constants and {the bounded domain $D$ is the main unknown.} Here
$D^c := \BFR^n \setminus D$ denotes the complementary set of the domain $D$.

Being aware that $u$ may change its sign in $\bar{D}^c$, in general $\bar{D}^c$ does not coincide with the set $\{u<0\}$.  Thus \eqref{E:FreeBD-E} is rather different from the classical unstable obstacle problem which has been extensively investigated since the seminal works \cite{ASW12,MG07}, that equation \eqref{E:FreeBD-E} does not only concern the function $u$ but also the domain $D$.

Assume equation \eqref{E:FreeBD-E} exists a weak solution $(u,D) \in H^1(\R^n) \times \mathbb{O}(\R^n)$, where $\mathbb{O}(\R^n):=\{ D \subset \R^n \text{ is a bounded domain} \}.$ By the standard elliptic regularity theory, we have $u \in C^{1,\alpha}(\BFR^2)$ for any $\alpha\in (0,1)$ and $u$ is real-analytic in either $D$ or $\bar{D}^c$. 

Employing the the implicit function theorem on a regular point $x \in \p D$, i.e., $|\nabla u(x)| \neq 0$, there exists $\delta>0$ such that $\p D \cap B_r(x)$ is $C^{1,\alpha}$ curve for some $0<\alpha<1$. Further employing the classical bootstrap argument, see for example  \cite{Kin1980}, the regularity of this local curve can be improved to real-analytic.

On the other hand, when $D$ is not $C^1$ smooth,
the singular set of $\p D$ defined by   
\be \label{E:Singular-set}
\mathcal{S}^u:= \left\{ x \in \p D \; \big| \;  |\nabla u(x)|  = 0 \right\},
\ee
and the geometric properties of the boundary nearby are our main interest. We begin with establishing the Weiss-type monotonicity formula for \eqref{E:FreeBD-E} as follows:
\begin{lemma} \label{L:Mono-F}
	Suppose that $(u, D)$ is a weak solution of \eqref{E:FreeBD-E}. For any $x_{0}\in\mathbb{R}^{n}$, let 
	\[
	\Phi_{x_{0}}(r)=r^{-n-2}\int_{B_{r}(x_{0})}|\nabla
	u|^{2}-2u(\lambda_{1}\I_{D}-\lambda_{2}\I_{D^{c}})
	d\mathcal{H}^{n}-2r^{-n-3}\int_{\partial
		B_{r}(x_{0})}u^{2}d\mathcal{H}^{n-1},
	\]
where $\mathcal{H}^k$ is the $k$-dimensional Hausdorff measure. Then for any $0<\rho<\delta$, the following Weiss-type monotonicity formula holds
	\begin{equation}\label{E:monoton}
		\Phi_{x_{0}}(\delta)-
		\Phi_{x_{0}}(\rho)=\int_{\rho}^{\delta}\int_{\partial
			B_{r}(x_{0})}2r^{-n-2}(\nabla u\cdot
		\nu-\frac{2u}{r})^{2}d\mathcal{H}^{n-1}dr\geq 0.
	\end{equation}
 \end{lemma}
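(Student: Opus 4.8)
The plan is to reduce to $x_0=0$ by translation and to differentiate $\Phi_0$ after a parabolic-type rescaling adapted to the degree-two homogeneity. Write $f:=\lambda_{1}\I_{D}-\lambda_{2}\I_{D^{c}}$ so that $-\Delta u=f$, and introduce the blow-up family $u_{r}(x):=r^{-2}u(rx)$ together with $f_{r}(x):=f(rx)$. A direct change of variables shows that $\Phi_{0}(r)$ is exactly the scale-invariant Weiss energy of $u_{r}$ on the unit ball,
\[
\Phi_{0}(r)=\int_{B_{1}}\bigl(|\nabla u_{r}|^{2}-2u_{r}f_{r}\bigr)\,d\mathcal{H}^{n}-2\int_{\p B_{1}}u_{r}^{2}\,d\mathcal{H}^{n-1},
\]
and that $-\Delta u_{r}=f_{r}$ in $B_{1}$. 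The degree-two scaling is encoded in the pointwise identity $r\,\dot u_{r}=x\cdot\nabla u_{r}-2u_{r}$, where $\dot u_{r}:=\tfrac{d}{dr}u_{r}$; note that on $\p B_{1}$ this reads $\p_{\nu}u_{r}-2u_{r}=r\,\dot u_{r}$.

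First I would differentiate $\Phi_{0}$ in $r$ under the integral sign. The gradient term produces $2\int_{B_{1}}\nabla u_{r}\cdot\nabla\dot u_{r}$, which after integrating by parts and using $-\Delta u_{r}=f_{r}$ becomes $2\int_{B_{1}}f_{r}\dot u_{r}+2\int_{\p B_{1}}\p_{\nu}u_{r}\,\dot u_{r}$. The potential term $-2\int_{B_{1}}u_{r}f_{r}$ contributes $-2\int_{B_{1}}\dot u_{r}f_{r}$ (plus a cross term I will argue vanishes, see below), and the spherical term $-2\int_{\p B_{1}}u_{r}^{2}$ contributes $-4\int_{\p B_{1}}u_{r}\dot u_{r}$. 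The two interior integrals $\pm2\int_{B_{1}}f_{r}\dot u_{r}$ cancel, leaving only $2\int_{\p B_{1}}\dot u_{r}(\p_{\nu}u_{r}-2u_{r})$. Substituting $\p_{\nu}u_{r}-2u_{r}=r\dot u_{r}$ gives $\Phi_{0}'(r)=2r\int_{\p B_{1}}\dot u_{r}^{2}\,d\mathcal{H}^{n-1}\ge0$. Finally, undoing the rescaling via $\dot u_{r}(\omega)=r^{-2}\bigl(\p_{\nu}u(r\omega)-\tfrac{2}{r}u(r\omega)\bigr)$ together with $d\mathcal{H}^{n-1}(\omega)=r^{1-n}\,d\mathcal{H}^{n-1}$ on $\p B_{r}$ converts the spherical integral into precisely $2r^{-n-2}\int_{\p B_{r}}(\nabla u\cdot\nu-\tfrac{2u}{r})^{2}\,d\mathcal{H}^{n-1}$, and integrating in $r$ from $\rho$ to $\delta$ yields \eqref{E:monoton}.

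The one genuinely delicate point — and the step I expect to be the main obstacle — is the differentiation of the potential term, where the chain rule also produces the cross term $-2\int_{B_{1}}u_{r}\,\dot f_{r}$ with $\dot f_{r}=x\cdot\nabla f(rx)$. Since $f$ is piecewise constant, $\nabla f$ is a measure concentrated on the free boundary $\p D$, and $\dot f_{r}$ is correspondingly supported where $rx\in\p D$; but $u$ vanishes there, so formally $u_{r}\,\dot f_{r}\equiv0$ and the cross term disappears. Because $u$ is only $C^{1,\alpha}$ and $\p D$ is a priori irregular, I would not manipulate $u\,\nabla f$ pointwise. Instead I would justify the vanishing rigorously by computing $\tfrac{d}{dr}\int_{B_{1}}u_{r}f_{r}=\tfrac{d}{dr}\bigl(r^{-n-2}\int_{B_{r}}uf\bigr)$ directly through differentiation of the domain, and matching it against $\int_{B_{1}}\dot u_{r}f_{r}$ by means of the divergence identity $\int_{B_{r}}f\,(x\cdot\nabla u)=r\int_{\p B_{r}}fu-n\int_{B_{r}}fu$; this identity, obtained from $\operatorname{div}(xfu)=nfu+f\,(x\cdot\nabla u)+u\,(x\cdot\nabla f)$ together with $u=0$ on $\p D$, is the exact place where the free-boundary condition $\p D\subset\{u=0\}$ enters and legitimizes dropping the cross term. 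As an alternative route that avoids rescaling altogether, I could differentiate $\Phi_{0}(r)$ in its original form and combine the Rellich--Pohozaev identity (obtained by testing $-\Delta u=f$ against $x\cdot\nabla u$) with the same divergence identity; both routes meet at this single structural cancellation.
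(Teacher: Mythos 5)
Your proof is correct, and it reaches \eqref{E:monoton} by a route that is organized differently from the paper's, though the two hinge on the identical structural cancellation. You rescale to $u_r(x)=r^{-2}u(x_0+rx)$, observe $\Phi_{x_0}(r)$ is the scale-invariant Weiss energy of $u_r$ on $B_1$, and differentiate the family in $r$, using $r\dot u_r=x\cdot\nabla u_r-2u_r$; the paper instead never rescales: it first proves the inner-variation identity \eqref{E:Var-def} for arbitrary $\vec X\in C_0^\infty$, specializes to the truncated radial field $\vec X_k=\eta_k(x)x$ and lets $k\to\infty$ to obtain the Rellich--Pohozaev identity \eqref{E:Weak-solu} on $B_r$ for a.e.\ $r$, and then differentiates $\Phi_{x_0}$ directly, closing the computation with the Green identity $\int_{\p B_r}u\,\p_\nu u=\int_{B_r}|\nabla u|^2-u(\lambda_1\I_D-\lambda_2\I_{D^c})$. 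Your ``alternative route that avoids rescaling'' at the end is in fact literally the paper's proof, and your divergence identity $\int_{B_r}f\,(x\cdot\nabla u)=r\int_{\p B_r}fu-n\int_{B_r}fu$ is exactly \eqref{E:Var-def} with $\vec X=\I_{B_r}x$ read off on its $f$-part: the term $u\,(x\cdot\nabla f)$ that you kill via $u=0$ on $\p D$ corresponds precisely to the last (unremarked) integration by parts in the paper's proof of \eqref{E:Var-def}, where $2\int f\,\nabla u\cdot\vec X$ is converted to $-2\int uf\,\mathrm{div}\,\vec X$ by integrating separately over $D$ and $D^c$ with vanishing boundary contribution. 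What each approach buys: yours makes the homogeneity mechanism transparent --- $\Phi'(r)=2r\int_{\p B_1}\dot u_r^2\ge 0$ exhibits the derivative as the deviation of $u$ from $2$-homogeneity, which is exactly what drives case (2) of Proposition \ref{P:Class} --- at the cost of justifying differentiation under the integral sign of the rescaled family (harmless here, since $\Delta u\in L^\infty$ gives $u\in W^{2,p}_{loc}$ for all $p<\infty$, so all terms are absolutely continuous in $r$ with the formal derivatives a.e.); the paper's smooth-cutoff computation avoids differentiating the family altogether and produces the boundary identity for a.e.\ $r$ cleanly. One refinement worth making in your cross-term argument: since no perimeter regularity of $\p D$ is assumed, do not treat $\nabla f$ as a surface measure; your fallback of matching the domain derivative $\frac{d}{dr}\bigl(r^{-n-2}\int_{B_r}uf\bigr)$ against $\int_{B_1}\dot u_r f_r$ is the right fix, and the needed fact $\int_D\p_i(u\varphi)=0$ for $\varphi\in C_c^\infty$ can be proved for an arbitrary open $D$ by truncation ($g_\epsilon(u\varphi)\I_D\in C_c^1$ for $g_\epsilon$ vanishing near $0$, then $\epsilon\to 0$ using $\nabla(u\varphi)=0$ a.e.\ on $\{u\varphi=0\}$), which puts your proof at --- indeed slightly above --- the paper's own level of rigor on this point.
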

 We begin with proving the following lemma.
\begin{lemma} 
If $u$ is a weak solution of \eqref{E:FreeBD-E}, then 
	\begin{equation} \label{E:Var-def}
		\int_{\mathbb{R}^{n}}|\nabla u|^{2} \mathrm{div} \vec{X} -2 (\nabla u)^t
		\mathcal{D} \vec{X} \nabla u- 2 u  (\lambda_{1}\I_{D}-\lambda_{2}\I_{D^{c}} )\mathrm{div} \vec{X} =0
	\end{equation}
	holds for all $\vec{X}=(X_1,\ldots,X_n) \in 
	C^{\infty}_{0}(\mathbb{R}^{n},\mathbb{R}^{n})$, {where $\mathcal{D} \vec{X} = (\p_i X_j)_{1 \leq i,j \leq n}$.} 
\end{lemma}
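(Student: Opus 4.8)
The identity \eqref{E:Var-def} is the \emph{domain variation} (Pohozaev-type) formula for \eqref{E:FreeBD-E}, and the plan is to derive it directly from the equation by testing against the field $\vec{X}\cdot\nabla u$. Conceptually it is the first inner variation of the energy $J(u)=\int_{\BFR^n}|\nabla u|^2-2u(\lambda_1\I_D-\lambda_2\I_{D^c})$ along the flow $\Phi_t=\mathrm{id}+t\vec{X}$: writing $u_t=u\circ\Phi_t^{-1}$ and $D_t=\Phi_t(D)$, a change of variables in $J(u_t,D_t)$ and differentiation at $t=0$ formally produces \eqref{E:Var-def}, the vanishing reflecting that $u$ is a critical point. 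I would, however, carry out the equivalent and more self-contained computation below, which uses only that $-\Delta u=\lambda_1\I_D-\lambda_2\I_{D^c}=:f$ holds and that $u=0$ on $\p D$. Throughout I use that elliptic estimates give $u\in C^{1,\alpha}_{loc}\cap W^{2,p}_{loc}$ for every $p<\infty$ (since $\Delta u\in L^\infty$), which legitimizes the integrations by parts involving second derivatives.

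First I would test the equation against $\vec{X}\cdot\nabla u\in W^{1,2}$ and integrate by parts once (no boundary terms, as $\vec{X}$ is compactly supported):
\[
\int_{\BFR^n} f\,(\vec{X}\cdot\nabla u)=\int_{\BFR^n}(-\Delta u)(\vec{X}\cdot\nabla u)=\int_{\BFR^n}\nabla u\cdot\nabla(\vec{X}\cdot\nabla u).
\]
Expanding $\p_i(\vec{X}\cdot\nabla u)=\sum_j(\p_i X_j)\p_j u+\sum_j X_j\p_{ij}u$ splits the right-hand side into the quadratic term $(\nabla u)^t\mathcal{D}\vec{X}\,\nabla u$ (the quadratic form being insensitive to transposing $\mathcal{D}\vec{X}$) and the term $\frac{1}{2}\vec{X}\cdot\nabla|\nabla u|^2$. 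Integrating the latter by parts (valid since $|\nabla u|^2\in W^{1,1}_{loc}$) converts it to $-\frac{1}{2}|\nabla u|^2\,\mathrm{div}\,\vec{X}$, so that
\[
\int_{\BFR^n} f\,(\vec{X}\cdot\nabla u)=\int_{\BFR^n}(\nabla u)^t\mathcal{D}\vec{X}\,\nabla u-\frac{1}{2}\int_{\BFR^n}|\nabla u|^2\,\mathrm{div}\,\vec{X}.
\]
Next I would rewrite the source term using the free boundary condition: the claim is that $\int f(\vec{X}\cdot\nabla u)=-\int uf\,\mathrm{div}\,\vec{X}$, i.e. $\int f\,\mathrm{div}(u\vec{X})=0$. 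Since $f$ is constant ($\lambda_1$ on $D$, $-\lambda_2$ on $D^c$), this reduces to $\int_D\mathrm{div}(u\vec{X})=\int_{D^c}\mathrm{div}(u\vec{X})=0$; by the divergence theorem these equal $\pm\int_{\p D}u(\vec{X}\cdot\nu)\,d\CH^{n-1}$, which vanish because $u=0$ on $\p D$. Substituting this and multiplying the resulting identity by $-2$ yields precisely \eqref{E:Var-def}.

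The one genuinely delicate point — which I expect to be the main obstacle — is justifying that the jump contribution of $f$ across $\p D$ vanishes with only $C^{1,\alpha}$ regularity, i.e. making the divergence theorem on $D$ and $D^c$ rigorous. I would handle this by splitting every integration by parts over the two regions $D$ and $D^c$ separately, where $u$ is real-analytic; the interface integrals on $\p D$ then occur in pairs and cancel by the continuity of $\nabla u$ across $\p D$ (so that $\nabla u=(\p_\nu u)\nu$ there from both sides, giving matching $(\p_\nu u)^2(\vec{X}\cdot\nu)$ and $|\nabla u|^2(\vec{X}\cdot\nu)$ terms of opposite sign), while the one surviving surface term is killed outright by $u|_{\p D}=0$. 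Rigorously this needs $\p D$ to carry a locally finite, $(n-1)$-rectifiable measure so that its reduced boundary $\p^* D$ supports the divergence theorem; equivalently one phrases the whole computation globally in $W^{2,2}_{loc}$ and invokes $\I_D\in BV_{loc}$, reducing everything to the single identity $\int f\,\mathrm{div}(u\vec{X})=-(\lambda_1+\lambda_2)\int_{\p^* D}u(\vec{X}\cdot\nu_D)\,d\CH^{n-1}=0$. Granting this mild structural property of $\p D$, which is consistent with the stated $C^{1,\alpha}$ regularity and analyticity in the two phases, the identity follows.
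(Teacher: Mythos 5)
Your proof is correct and is essentially the paper's own argument run in reverse: the paper starts from $0=\int_{\BFR^n}\mathrm{div}(|\nabla u|^{2}\vec{X})$ and substitutes $-\Delta u=\lambda_1\I_D-\lambda_2\I_{D^c}$, whereas you test the equation against $\vec{X}\cdot\nabla u$, but both reduce to the same two ingredients --- the Pohozaev-type rearrangement of $\int\nabla u\cdot\nabla(\vec{X}\cdot\nabla u)$ into $(\nabla u)^{t}\mathcal{D}\vec{X}\,\nabla u-\tfrac12|\nabla u|^{2}\mathrm{div}\,\vec{X}$, and the conversion $\int f\,(\vec{X}\cdot\nabla u)=-\int fu\,\mathrm{div}\,\vec{X}$ via $\int_{D}\mathrm{div}(u\vec{X})=0$ from $u=0$ on $\p D$. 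Your closing paragraph justifying that last divergence-theorem step (via rectifiability of $\p D$, or $\I_D\in BV_{loc}$ and the reduced boundary) addresses a point the paper's ``straightforward computation'' passes over in silence, and is a sound, if slightly stronger than strictly necessary, way to close it.
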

\begin{proof}
Since $\vec{X}$ is compactly supported  and $\nabla u \in L^2(\BFR^2)$, by a straightforward computation, we get
	\begin{equation*}
		\begin{aligned}
			0&=\int_{\mathbb{R}^{n}}\mathrm{div}(|\nabla u|^{2}\vec{X})\\
			&=\int_{\mathbb{R}^{n}}2 \sum_{i=1}^n\sum_{j=1}^n\p_{i} u \p_{ij} u X_j+|\nabla
			u|^{2}\mathrm{div}\vec{X} \\
			 & =\int_{\mathbb{R}^{n}} -2\sum_{i=1}^n\sum_{j=1}^n \left(\p_{ii} u \p_{j} u X_{j}+\p_{i} u \p_i X_{j} \p_{j} u\right)+|\nabla
			u|^{2}\mathrm{div}\vec{X}\\		
			& =\int_{\mathbb{R}^{n}}-2 \Delta
			u (\nabla u \cdot \vec{X}) -2(\nabla u)^t \mathcal{D} \vec{X} \nabla u +|\nabla
			u|^{2}\mathrm{div}\vec{X} \\
			&=\int_{\mathbb{R}^{n}} 2\lambda_{1}\I_{D} \nabla u \cdot \vec{X} - 2\lambda_{2}\I_{D^{c}} \nabla u \cdot \vec{X} -2(\nabla u)^t \mathcal{D}\vec{X} \nabla u+ |\nabla
			u|^{2}\mathrm{div}\vec{X}\\
			& = \int_{\BFR^n}  |\nabla
			u|^{2}\mathrm{div}\vec{X} -2(\nabla u)^t \mathcal{D}\vec{X} \nabla u -2u(\lambda_{1}\I_{D} - \lambda_{2}\I_{D^{c}}) {\rm div} \vec{X}.
		\end{aligned}
	\end{equation*}
Then the proof is complete.

\end{proof}
Now we are going to prove the Weiss-type monotonicity formula of the sign-changing elliptic free boundary problem.
\begin{proof}[Proof of Lemma \ref{L:Mono-F}]

Suppose that $r,k$ are positive constants and let
\[
\eta_{k}(x)=\max\left\{0,\min\left\{1,(r-|x|)k\right\}\right\},
\vec{X}_{k}(x)=\eta_{k}(x)x=\left(\eta_{k}(x)x_{1},\cdots,\eta_{k}(x)x_{n}\right).
\]
A straightforward computation shows that
\[
\begin{split}
 \mathrm{div}\vec{X}_{k}(x) & =\sum_{i=1}^n \p_i \left(\eta_{k}(x)x_{i}\right)
		=n \eta_{k}(x)+\nabla \eta_{k}(x) \cdot x, \\
	(\nabla u)^t \mathcal{D} \vec{X}_{k} \nabla u  & = \sum_{i=1}^n\sum_{j=1}^n \p_i u \p_j (\eta_{k}(x)x_{i}) \p_j u\\
		&=\sum_{i=1}^n\sum_{j=1}^n \p_i u \eta_{k}\delta_{ij} \p_j u +\p_i u\p_j (\eta_{k})x_{i}\p_j u. \\
		&
	=|\nabla u|^{2}\eta_{k}+(\nabla u\cdot x)(\nabla u\cdot \nabla\eta_{k}).
	\end{split}
\]
According to \eqref{E:Var-def}, one has
\begin{equation*}
	\begin{aligned}
		0&=\int_{\mathbb{R}^{n}} n|\nabla u|^{2}\eta_{k}+|\nabla
		u|^{2}\nabla \eta_{k}\cdot x-2|\nabla u|^{2}\eta_{k}-2(\nabla
		u\cdot x)(\nabla u\cdot \nabla\eta_{k})\\
		&-\int_{\mathbb{R}^{n}}2(\lambda_{1}\I_{D} -  \lambda_2 \I_{D^c}) (n \eta_{k}+\nabla
		\eta_{k}\cdot x)u.
	\end{aligned}
\end{equation*}
Due to the definition of cut-off function $\eta_k(x)$, we get $\eta_{k} = 1, |x| \leq r-\frac{1}{k}$ and $\eta_k =0, |x| \geq r$. Now letting $k\to +\infty$, we obtain
\begin{equation} \label{E:Weak-solu}
	\begin{aligned}
		0=&\int_{B_{r}(0)} (n-2) |\nabla u|^{2}- 2n u(\lambda_{1}\I_{D}-\lambda_{2}\I_{D^{c}}) d\mathcal{H}^{n}\\
		&-r \int_{\partial B_{r}(0)}|\nabla u|^{2}-2(\nabla u\cdot
		\nu)^{2}- 2u(\lambda_{1}\I_{D} -\lambda_{2}\I_{D^{c}}) d\mathcal{H}^{n-1},
	\end{aligned}
\end{equation}
for $a.e.\;r>0$, where $\nu$ is the unit outward normal vector of $B_{r}(0)$. Then one has 
\[
	\begin{split}
		\frac{d}{dr}\Phi_{x_{0}}(r)& =-(n+2)r^{-n-3}\int_{B_{r}(x_{0})}|\nabla
		u|^{2}-2u(\lambda_{1}\I_{D}-2\lambda_{2}\I_{D^{c}})
		d\mathcal{H}^{n}\\
		& +r^{-n-2}\int_{\partial B_{r}(x_{0})}|\nabla
		u|^{2}-2u(\lambda_{1}\I_{D}-\lambda_{2}\I_{D^{c}}) 
		d\mathcal{H}^{n-1}\\
		&  +8r^{-5}\int_{\partial
			B_{1}(0)}u^{2}(x_{0}+rx)d\mathcal{H}^{n-1}-4r^{-4}\int_{\partial
			B_{1}(0)}u(x_{0}+rx) \left(\nabla u(x_{0}+rx)\cdot x\right)d\mathcal{H}^{n-1}.
\end{split}
\]
Inserting \eqref{E:Weak-solu} into above equation, one has  
\be \label{E:E1}
\begin{aligned}
	\Phi_{x_0}^{\prime}(r)	&=-r^{-n-3}\int_{B_{r}(x_{0})}(n-2)|\nabla
		u|^{2}-2n u(\lambda_{1}\I_{D}-\lambda_{2}\I_{D^{c}}) d\mathcal{H}^{n}\\
		& -4r^{-n-3}\int_{B_{r}(x_{0})}|\nabla
		u|^{2}-u(\lambda_{1}\I_{D}-\lambda_{2}\I_{D^{c}})
		d\mathcal{H}^{n}\\
		& +r^{-n-2}\int_{\partial B_{r}(x_{0})}|\nabla
		u|^{2}-2u(\lambda_{1}\I_{D}-\lambda_{2}\I_{D^{c}}) d\mathcal{H}^{n-1}\\
		& +8r^{-n-4}\int_{\partial
			B_{r}(x_{0})}u^{2}d\mathcal{H}^{n-1}-4r^{-n-3}\int_{\partial
			B_{r}(x_{0})}u(\nabla u\cdot \nu)d\mathcal{H}^{n-1}\\
		&=2 r^{-n-2}\int_{\partial B_{r}(x_{0})} (\nabla u\cdot
		\nu)^{2}d\mathcal{H}^{n-1}
		 +8r^{-n-4}\int_{\partial
			B_{r}(x_{0})}u^{2}d\mathcal{H}^{n-1}\\
		& -8r^{-n-3}\int_{\partial
			B_{r}(x_{0})}u(\nabla u\cdot \nu)d\mathcal{H}^{n-1}\\  
		&=2r^{-n-2}\int_{\partial B_{r}(x_{0})}\left(\nabla u\cdot \nu-\frac{2u}{r}\right)^{2}d\mathcal{H}^{n-1}, 
	\end{aligned}
\ee
in which we used the identity
\[
\begin{split}
 \int_{\partial
	B_{r}(x_{0})}u(\nabla u\cdot \nu)d\mathcal{H}^{n-1}&= \int_{B_r(x_0)} |\nabla u|^2 +u \Delta u \;d\mathcal{H}^{n}\\
& = \int_{B_r(x_0)} |\nabla u|^2 -u(\lambda_1 \I_D - \lambda_2 \I_{D^c}) d \mathcal{H}^n. 
\end{split}
\]
Equation \eqref{E:monoton} is obtained by integrating equation \eqref{E:E1}.
\end{proof}

Lemma \ref{L:Mono-F} leads to a classification of the blow-up limits of $u$ at singular points on the free boundary.
\begin{proposition}[Classification of blow-up limits] \label{P:Class}
	Suppose $u$ is a solution of \eqref{E:FreeBD-E} and $x_{0}\in \mathcal{S}^u$. The following cases occur alternatively:
\begin{enumerate} 
\item In the case of $\Phi_{x_{0}}(0+)=-\infty$, one has $\lim_{r\to
		0}r^{-3-n}\int_{\partial B_{r}(x_{0})}u^{2}d\mathcal{H}^{n-1}=+\infty$,
	and each limit of
	\[
	v_r(x)=\frac{u(x_{0}+rx)}{S(x_{0},r)}
	\]
	as $r\to 0$ is a homogeneous harmonic polynomial of degree two, where
	\[
	S(x_{0},r):=\left(r^{1-n}\int_{\partial
		B_{r}(x_{0})}u^{2}d\mathcal{H}^{n-1}\right)^{1/2}.
	\]

\item In the case of $\Phi_{x_{0}}(0+) > -\infty$, for any given $R>0$, there exists $\tau(R)>0$ such that for $r<\tau(R)$,
	\[
	u_{r}(x):=\frac{u(x_{0}+rx)}{r^{2}}
	\]
	is bounded in $W^{1,2}(B_{R}(0))$. Let $\{r_{j}\}_{j\in\mathbb{N}}$ be a sequence such that
	$\{u_{r_{j}}\}_{j\in\mathbb{N}}$ converges to $u_{0}$ weakly in
	$W^{1,2}_{loc}(\mathbb{R}^n)$, then $u_{0}$ is a degree two homogeneous solution of {
\begin{equation}\label{eq:vp-limit}	
		\begin{cases}
			-\Delta u_{0}=\lambda_{1}\mathbf{I}_{D_*^1}-\lambda_{2}\mathbf{I}_{D_{*}^{2}}  & \text{ in }\mathbb{R}^{n},\\
			\quad \quad u_0   =0 & \text{ on }  \partial D_{*}^1 \cup \partial D_*^2,
		\end{cases}
		\end{equation}
where $D_*^1$ and $D_*^2$ are open cones satisfying $D_*^1\cap D_*^2=\emptyset$, $\mathbb{R}^2\setminus(D_*^1\cup D_*^2)\subset\{u_0= 0\}$ and $u_0>0$ in $D_*^1$. Here $\lambda_1\mathbf{I}_{D_{*}^1}-\lambda_2\mathbf{I}_{D_{*}^2}=\lim_{j\to \infty}\left( \lambda_1\mathbf{I}_{D_{r_{j}}}-\lambda_2\mathbf{I}_{D_{r_{j}}^c}\right)$ in the sense of distribution and $D_{r}=\{x:x_{0}+rx\in D\}$. }
\end{enumerate}
\end{proposition}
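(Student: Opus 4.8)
The plan is to exploit that the Weiss energy is invariant under the quadratic rescaling adapted to the problem. Writing $u_r(x):=u(x_0+rx)/r^2$, $D_r:=\{x:x_0+rx\in D\}$ and $f_r:=\lambda_1\I_{D_r}-\lambda_2\I_{D_r^c}$, a change of variables turns $\Phi_{x_0}$ into the Weiss functional evaluated on the unit ball,
\[
\Phi_{x_0}(r)=\int_{B_1}\left(|\nabla u_r|^2-2u_rf_r\right)dx-2\int_{\p B_1}u_r^2\,d\CH^{n-1},
\]
together with $-\Delta u_r=f_r$. By Lemma \ref{L:Mono-F} the monotone limit $\Phi_{x_0}(0+)=\lim_{r\to0^+}\Phi_{x_0}(r)$ exists in $[-\infty,+\infty)$, which is exactly the alternative of the statement. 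The key auxiliary quantity is $H(r):=r^{-n-3}\int_{\p B_r(x_0)}u^2\,d\CH^{n-1}=\int_{\p B_1}u_r^2\,d\CH^{n-1}$. Differentiating $H$, using $\p_r u_r=r^{-1}(\nabla u_r\cdot\nu-2u_r)$ on $\p B_1$ together with $-\Delta u_r=f_r$, I obtain the two identities $H'(r)=\frac2r\left(\Phi_{x_0}(r)+\int_{B_1}u_rf_r\right)$ and $\int_{B_1}|\nabla u_r|^2=\Phi_{x_0}(r)+2\int_{B_1}u_rf_r+2H(r)$, which drive both cases.

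For case (2), $\Phi_{x_0}(0+)>-\infty$, monotonicity makes $\Phi_{x_0}$ bounded on $(0,\delta)$, so by the second identity the $W^{1,2}(B_1)$ bound reduces to a bound for $H$, since $|\int_{B_1}u_rf_r|\le C(\int_{B_1}u_r^2)^{1/2}$ and $\int_{B_1}u_r^2=\int_0^1 s^{n+3}H(rs)\,ds$. I would bound $H$ by contradiction: if $H(r_j)\to+\infty$ along some $r_j\to0$, then normalizing $u$ by $S(x_0,r)$ rather than by $r^2$ and running the case (1) blow-up below yields a nontrivial harmonic limit, and re-evaluating $\Phi_{x_0}$ along $r_j$ forces $\Phi_{x_0}(0+)=-\infty$, contradicting finiteness. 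Once $u_r$ is bounded in $W^{1,2}(B_1)$ (hence, by the same bounds at scale $Rr$, in $W^{1,2}_{loc}(\BFR^n)$) I extract $u_{r_j}\rightharpoonup u_0$; the convergence $\I_{D_{r_j}}\to\I_{D_0}$ in $L^1_{loc}$, with $D_0=\lim D_{r_j}$, lets me pass $-\Delta u_{r_j}=f_{r_j}$ to the limit to get \eqref{eq:vp-limit} (note $u_0\not\equiv0$ automatically, since the right-hand side of \eqref{eq:vp-limit} never vanishes). Degree-two homogeneity is the payoff of \eqref{E:monoton}: rescaled to unit scale the defect becomes the bulk annular integral $\Phi_{x_0}(\tau r_j)-\Phi_{x_0}(\sigma r_j)=\int_{B_\tau\setminus B_\sigma}2|x|^{-n-2}\left(\nabla u_{r_j}\cdot\tfrac{x}{|x|}-\tfrac{2u_{r_j}}{|x|}\right)^2 dx$, whose left side tends to $0$; this integrand is convex in $(u_{r_j},\nabla u_{r_j})$, hence weakly lower semicontinuous, so the limit obeys $\int_{B_\tau\setminus B_\sigma}|x|^{-n-2}(\nabla u_0\cdot\tfrac{x}{|x|}-\tfrac{2u_0}{|x|})^2=0$ for all $0<\sigma<\tau$, i.e. $x\cdot\nabla u_0=2u_0$.

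For case (1), $\Phi_{x_0}(0+)=-\infty$, I first show $H(r)\to+\infty$: were $H$ bounded, then $\int_{B_1}u_rf_r$ is bounded, so $H'(r)\le\frac2r(\Phi_{x_0}(r)+C)$ with $\Phi_{x_0}(r)\to-\infty$, and integrating forces $H(r)\to+\infty$, a contradiction; hence $S(x_0,r)/r^2\to+\infty$. I then renormalize at the true size by setting $\tilde u_r(x):=u(x_0+rx)/S(x_0,r)$, so that $\|\tilde u_r\|_{L^2(\p B_1)}=1$ and $-\Delta\tilde u_r=(r^2/S(x_0,r))\,f(x_0+r\cdot)\to0$ in $L^\infty$. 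Elliptic estimates plus an Almgren-type frequency monotonicity for the almost-harmonic $\tilde u_r$ give $W^{1,2}_{loc}$ compactness and show that every limit $h$ is a homogeneous harmonic function with $\|h\|_{L^2(\p B_1)}=1$, hence a homogeneous harmonic polynomial. Its degree $k$ is at least two, since $u\in C^{1,\alpha}$ with $u(x_0)=|\nabla u(x_0)|=0$ rules out $k=0,1$ (the normalized value and gradient at the origin vanish in the limit); and it is at most two, since a degree-$k$ homogeneous limit forces $S(x_0,r)=r^kL(r)$ with $L$ slowly varying, whence $H(r)=r^{2k-4}L(r)^2$, which can diverge only when $k=2$. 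Therefore $k=2$.

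The step I expect to be the main obstacle is the boundedness of $H$, equivalently the $W^{1,2}$ bound, in case (2). The naive Grönwall estimate from $H'(r)=\frac2r(\Phi_{x_0}(r)+\int_{B_1}u_rf_r)$ yields only logarithmic control, because at the critical quadratic homogeneity the leading parts of the Dirichlet energy and of $2H$ cancel; so finiteness of $\Phi_{x_0}(0+)$ must be used essentially, through the contradiction argument that couples the two cases. The remaining delicate points are the nontriviality of the blow-up and the convergence $\I_{D_{r_j}}\to\I_{D_0}$ to a genuine characteristic function, which is what makes \eqref{eq:vp-limit} meaningful.
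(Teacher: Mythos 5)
Your proposal follows essentially the same route as the paper's proof (both modeled on Monneau--Weiss \cite{MG07}): the rescaled Weiss identities, the dichotomy on $\Phi_{x_0}(0+)$, the $S$-normalized blow-up with a frequency formula in case (1) (the paper invokes Lemma 4.2 of \cite{MG07} at exactly the point where you invoke ``Almgren-type frequency monotonicity''), and in case (2) the $W^{1,2}$ bound, the vanishing annular Weiss defect plus weak lower semicontinuity to get $x\cdot\nabla u_0=2u_0$, and the convergence $\I_{D_{r_j}}\to\I_{D_0}$ in $L^1_{loc}$ --- which, for what it is worth, the paper also asserts without proof, just as you do. A small point in your favor: your ODE argument via $H'(r)=\frac{2}{r}\bigl(\Phi_{x_0}(r)+\int_{B_1}u_rf_r\bigr)$ can be pushed (by a crossing argument, since $H'<0$ on $\{H\le M\}$ for $r$ small) to the full limit $H(r)\to+\infty$ claimed in the statement, whereas the paper only derives the limsup and then passes to a sequence.

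The one step where your argument as written would fail is the contradiction you propose for the boundedness of $H$ in case (2). Suppose $T_j:=S(x_0,r_j)/r_j^2\to\infty$ while $\Phi_{x_0}(0+)>-\infty$, and let $v_j:=u(x_0+r_j\cdot)/S(x_0,r_j)$ converge to a degree-two homogeneous harmonic polynomial $p$ with $\|p\|_{L^2(\p B_1)}=1$. Writing $f_{r_j}:=\lambda_1\I_{D_{r_j}}-\lambda_2\I_{D_{r_j}^c}$, one has
\begin{equation*}
\Phi_{x_0}(r_j)=T_j^2\left(\int_{B_1}|\nabla v_j|^2\,d\mathcal{H}^n-2\int_{\p B_1}v_j^2\,d\mathcal{H}^{n-1}\right)-2\int_{B_1}u_{r_j}f_{r_j}\,d\mathcal{H}^n,
\end{equation*}
and since $\int_{B_1}|\nabla p|^2=2\int_{\p B_1}p^2$ holds \emph{exactly} for every degree-two homogeneous harmonic $p$, the bracket is only $o(1)$ and has no sign; multiplied by $T_j^2\to\infty$ it yields nothing, so ``re-evaluating $\Phi_{x_0}$ along $r_j$'' does not force $\Phi_{x_0}(0+)=-\infty$. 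This is exactly the cancellation you yourself diagnose when explaining why Gr\"onwall gives only logarithmic control, but the coupling argument you sketch suffers from it too: closing the gap requires the quantitative content of the frequency formula (this is how Monneau--Weiss prove that a finite Weiss limit implies characteristic growth), not merely the qualitative identification of the blow-up limit. To be fair, the paper sidesteps the issue entirely, asserting $\limsup_{r\to 0}r^{-n-3}\int_{\p B_r(x_0)}u^2\,d\mathcal{H}^{n-1}<\infty$ in case (2) with no justification; so your proposal reproduces the paper's architecture faithfully and is more candid about its weak joints, but the one step you single out as the main obstacle remains genuinely open in your write-up.
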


\begin{remark}
{When $n=2$ and $D_*^1\neq \emptyset$, we get $|\mathbb{R}^2 \setminus (D_*^1 \cup D_*^2) |=0$. Indeed, direct calculation shows that for all $x\notin D_*^1 \cup D_*^2\cup\{0\}$, $|
\nabla u(x)|\neq 0$. Therefore, $u_0$ is a solution to the equation
		\[
		-\Delta u_0=\lambda_1\I_{D_*^1}-\lambda_2\I_{(D_*^1)^c} \text{ in } \mathbb{R}^2
		\]
		with $u_0>0$ in $D_*^1$ and $u_0=0$ on $\partial D_*^1$. }
\end{remark}

\begin{proof} [Proof of Proposition \ref{P:Class}]
	(1). If $\Phi_{x_{0}}(0+)=-\infty$, we suppose towards a contradiction that $\limsup_{r\to
		0}S(x_{0},r)\leq Mr^2$ for some constant $M$, there exists an $r_{0}>0$
	such that, for $r<r_{0}$,
	\[
	\int_{\partial B_{r}(x_{0})}u^{2}d\mathcal{H}^{n-1}\leq M^{2}r^{n+3}.
	\]
	Thus
	\begin{equation*}
		\begin{aligned}
			\int_{B_{r}(x_{0})}|u|d\mathcal{H}^{n}&=\int_{0}^{r}\int_{\partial
				B_{t}(x_{0})}|u|d\mathcal{H}^{n-1}dt\\
			&\leq C_{1}\int_{0}^{r}\left(\int_{\partial
				B_{t}(x_{0})}u^{2}d\mathcal{H}^{n-1}\right)^{\frac{1}{2}}t^{\frac{n-1}{2}}dt\\
			&\leq C_{2}\int_{0}^{r}t^{\frac{n+3}{2}}t^{\frac{n-1}{2}}dt\\
			&\leq C_{3}r^{n+2}.
		\end{aligned}
	\end{equation*}
	Consequently
	\[
	\left|\frac{1}{r^{n+2}}\int_{B_{r}(x_{0})}2u\left(\lambda_{1}\I_{D}-\lambda_{2}\I_{D^{c}}\right) \;d\mathcal{H}^{n}\right|\leq C_{4}.
	\]
	We conclude that
	\[
	\begin{split}
	\Phi_{x_{0}}(r)& \geq -\left|r^{-n-2}\int_{B_{r}(x_{0})}2u\left(\lambda_{1}\I_{D}-\lambda_{2}\I_{D^{c}}\right)
	d\mathcal{H}^{n}\right|-2r^{-n-3}\int_{\partial
		B_{r}(x_{0})}u^{2}d\mathcal{H}^{n-1}\\
		& \geq -C_{4}-2M^{2},
	\end{split}
	\]
	for all $r<r_{0}$, which contradicts the assumption that
	$\Phi_{x_{0}}(0+)=-\infty$. Therefore
	$\limsup_{r\to 0}\frac{S(x_{0},r)}{r^2}=+\infty$. We pick up a sequence
	$\{r_{j}\}_{j\in\mathbb{N}}$ such that $\lim_{j\to\infty}r_{j}=0$
	and $\lim_{j\to \infty}\frac{S(x_{0},r_{j})}{r_j^2}=+\infty$. Let
	
	\[
	v_{k}(x)=\frac{u(x_{0}+r_{k}x)}{S(x_{0},r_{k})}.
	\]
	Then $\|v_{k}\|_{L^{2}(\partial B_{1}(0))}=1$. We may assume that
	$\lim_{k\to\infty}v_{k}\to v_{0}$ 
	weakly in $L^{2}(\partial B_{1}(0))$. 
	
	Let $K=\frac{\max\{\lambda_{1},\lambda_{2}\}}{2n}$. Then
	$u_{r}^{+}+K|x|^{2}$ and $u_{r}^{-}+K|x|^{2}$ are subharmonic
	functions which imply
	\[
	\frac{1}{t^{n-1}}\int_{\partial
		B_{t}(0)}u_{r}^{+}+K|x|^{2}d\mathcal{H}^{n-1},
	\]
	\[
	\frac{1}{t^{n-1}}\int_{\partial
		B_{t}(0)}u_{r}^{-}+K|x|^{2}d\mathcal{H}^{n-1},
	\]
	are monotone increasing in $t$. Therefore
	\begin{equation*}
		\begin{aligned}
			\int_{\partial B_{t}(0)}u_{r}^{+}d\mathcal{H}^{n-1}&\leq
			t^{n-1}\int_{\partial
				B_{1}(0)}u_{r}^{+}+K|x|^{2}d\mathcal{H}^{n-1}\\
			&\leq C_{5}t^{n-1}\left(\|u_{r}\|_{L^{2}(\partial
				B_{1}(0))}+1\right),
		\end{aligned}
	\end{equation*}
	for some constant $C_{5}$. {Thus we get}
	\[
	\int_{B_{1}(0)}u_{r}^{+}d\mathcal{H}^{n}=\int_{0}^{1}\int_{\partial
		B_{t}(0)}u_{r}^{+}d\mathcal{H}^{n-1}dt\leq \frac{C_{5}}{n}\left(\|u_{r}\|_{L^{2}(\partial
		B_{1}(0))}+1\right).
	\]
Analogously, we have 
	\[
	\int_{B_{1}(0)}u_{r}^{-}d\mathcal{H}^{n}\leq \frac{C_{6}}{n}\left(\|u_{r}\|_{L^{2}(\partial
		B_{1}(0))}+1\right).
	\]

By the monotonicity formula
	\eqref{E:monoton} we have
	\[
	\int_{B_{1}(0)}\left(|\nabla
	u_{r_{k}}|^{2}+2\lambda_{1}\I_{D_{r_{k}}}u_{r_{k}}-2\lambda_{2}\I_{D_{r_{k}}^{c}}u_{r_{k}} \right)d\mathcal{H}^{n}-2\int_{\partial
		B_{1}(0)}u_{r_{k}}^{2}d\mathcal{H}^{n-1}\leq \Phi_{x_{0}}(r_{1}).
	\]
	Let $T_{k}=\frac{S(x_{0},r_{k})}{r_{k}^{2}}$ and $
		v_k(x)=\frac{u(x_{0}+r_kx)}{S(x_{0},r_k)}$. Then
	\[
	\int_{B_{1}(0)}|\nabla v_{k}|^{2}d\mathcal{H}^{n}\leq
	T_{k}^{-2}\Phi_{x_{0}}(r_{1})+CT_{k}^{-2}\int_{B_{1}(0)}|u_{r_{k}}|d\mathcal{H}^{n}+2\int_{\partial B_{1}(0)}v_{k}^{2}d\mathcal{H}^{n-1}.
	\]
	Note that $\lim_{k\to\infty}T_{k}=+\infty$ and
	\[
	T_{k}^{-1}\int_{B_{1}(0)}|u_{r_{k}}|d\mathcal{H}^{n-1}\leq
	CT_{k}^{-1}\left(\|u_{r_{k}}\|_{L^{2}(\partial B_{1}(0))}+1\right)\leq C(1+T_{k}^{-1}).
	\]
	Thus
	\[
	\sup_{k\in\mathbb{N}}\|\nabla v_{k}\|_{L^{2}(B_{1}(0))}\leq C_{6},
	\]
	for some constant $C_{6}$. By the Poincar\'e-Steklov inequality (see
	e.g. \cite[Lemma 3.30]{ErnLuc2004}) we have
	\[
	\| v_{k}\|_{L^{2}(B_{1}(0))}\leq C_{7}\left(\|\nabla
	v_{k}\|_{L^{2}(B_{1}(0))}+\|v_{k}\|_{L^{2}(\partial
		B_{1}(0))}\right)\leq C_{8},
	\]
	where $C_{8}$ does not depend on $k$. Thus we may assume that
	$v_{k}$ converges to $v_{0}$ weakly in $W^{1,2}(B_{1}(0))$.    
	Letting $k\to\infty$, we obtain that $v_{0}$ is harmonic and
	\[
	\int_{B_{1}(0)}|\nabla v_{0}|^{2}d\mathcal{H}^{n}\leq
	2\int_{\partial B_{1}(0)}v_{0}^{2}d\mathcal{H}^{n-1}.
	\]
	Since $|\Delta v_{k}|\leq \max\{\lambda_{1},\lambda_{2}\}$, by the
	elliptic regularity and the Sobolev embedding theorem, for any given
	$\alpha\in (0,1)$, there exists a constant $C_{9}$ such that
	\[
	\sup_{k\in \mathbb{N}}\|v_{k}\|_{C^{1,\alpha}(B_{1}(0))}\leq C_{9}.
	\]
	We may assume that $v_{k}$ converges to $v_{0}$ in
	$C^{1,\beta}(B_{1}(0))$ for some $\beta\in (0,\alpha)$. Notice that
	$v_{k}(0)=0$ and $\nabla v_{k}(0)=0$, we have $v_{0}(0)=0$ and $\nabla
	v_{0}(0)=0$. It follows from the frequency formula (see
	e.g. Lemma 4.2 of \cite{MG07}) that $v_{0}$ is a harmonic polynomial
	of degree 2.\\
	
	(2). If $\Phi_{x_{0}}(0+)>-\infty$, there exists a constant $M<\infty$
	such that 
	\[
	\limsup_{r\to
		0}r^{-3-n}\int_{\partial B_{r}(x_{0})}u^{2}d\mathcal{H}^{n-1}<M.
	\]
	Thus we can choose $r_{0}>0$ small enough such that
	$\|u_{r}\|_{L^{2}(\partial B_{1}(0))}\leq M+1$ for $r\in
	(0,r_{0})$. Without loss of generality, we set $R=1$. Similar to case (1), we have
	$\int_{B_{1}(0)}|u_r|d\mathcal{H}^{n}< C_{10}$ for some constant
	$C_{10}$. It follows from the monotonicity formula that there exists a
	constant $C_{11}$ such that
	\[
	\int_{B_{1}(0)}|\nabla u_{r}(x)|^{2}\leq
	\Phi_{x_{0}}(r)+2\max\{\lambda_{1},\lambda_{2}\}\int_{B_{1}(0)}|u_{r}|d\mathcal{H}^{2}+2\int_{\partial
		B_{1}(x)}u_{r}^{2}d\mathcal{H}^{n-1}\leq C_{11},
	\]
	for all $r\in (0,r_{0})$. Thus $\{u_{r}\}$ is uniformly bounded in
	$W^{1,2}(B_{1}(0))$. We may choose a sequence
	$\{r_{k}\}_{k\in\mathbb{N}}$ such that, for some $u_{0}\in
	W^{1,2}(B_{1}(0))$, $u_{r_{k}}\to u_{0}$ weakly in $W
	^{1,2}(B_{1}(0))$, $u_{r_{k}}\to u_{0}$ in $L^{2}(B_{1}(0))$ and
	$u_{r_{k}}\to u_{0}$ in $L^{2}(\partial B_{1}(0))$. Moreover, for any
	given $\sigma>\rho>0$, we have
	\[
	\int_{\rho}^{\delta}\int_{\partial B_{r}(x_{0})}2\left(\nabla
	u_{r_{k}}\cdot\nu-\frac{2u_{r_{k}}}{r}\right)^2 d\mathcal{H}^{n-1}dr=\Phi_{x_{0}}(r_{k}\delta)-\Phi_{x_{0}}(r_{k}\rho)\to 0,
	\]
	as $k\to \infty$. Letting $k\to \infty$, we have
	\[
	\nabla u_{0}(x)\cdot x-2u_{0}(x)=0
	\]
	for all $x\in\mathbb{R}^{n}$, and thus $u_{0}$ is a homogeneous function of
	degree 2.

	On the other hand, $u_{r_k}$ is a weak solution to equation
	\[
	-\Delta u_{r_k}=\lambda_1 \I_{D_{r_k}}-\lambda_2 \I_{(D_{r_k})^c},
	\]
	with $u_{r_k}=0$ on $\partial D_{r_k}$, where $D_{r_k}=\{x\in\R^n|x_0+r_kx\in D\}$.
	By the weak-* compactness of $L^{\infty}$, we may assume that $\lambda_1 \I_{D_{r_k}}-\lambda_2 \I_{(D_{r_k})^c}$ weak-* converges to some functions $\nu(x) \in L^\infty(\R^n)$, 
	and $u_0$ is a weak solution to equation 
	\begin{equation*}
		-\Delta u_{0}=\nu.
	\end{equation*}
	By the elliptic regularity theory, for any given $\alpha\in(0,1)$, $\{u_{r_k}\}$ is bounded in $C^{1,\alpha}_{loc}(\R^n)$. Thus we may assume $u_{r_k}$ converges to $u_0$ in $C^{1,\beta}_{loc}(\R^n)$ for some $\beta\in(0,\alpha)$. We first show that in each connected component of $\{u_0\neq 0\}$, either $-\Delta u_0\equiv \lambda_1$ or $-\Delta u_0\equiv -\lambda_2$. Indeed, if $u_0(x_1)<0$, there exists an $r_0$ such that $u_0<0$ in $B_{2r_0}(x_1)$. By the $C^{1,\alpha}$ convergence of $u_{r_k}$, there exists a constant $N\in \mathbb{N}$ such that for $k\geq N$, $u_{r_k}<0$ in $B_{r_0}(x_1)$. Thus $-\Delta u_{r_k}=-\lambda_2$ in $B_{r_0}(x_1)$ for $k\geq N$. Let $v_k=u_{r_k}-\frac{\lambda_2}{2n}|x|^2$. Then $\{v_k\}_{k>N}$ are harmonic functions and $v_k\to u_0-\frac{\lambda_2}{2n}|x|^2$ in $C^{1,\alpha}(B_{r_0}(x_1))$. It follows that $u_0-\frac{\lambda_2}{2n}|x|^2$ is harmonic in $B_{r_0}(x_1)$, which implies $-\Delta u_0=-\lambda_2$ in $B_{r_0}(x_1)$. Therefore, $\nu\equiv -\lambda_2$ in $\{u_0<0\}$. If $u_0(x_2)>0$, there exists an $r_0$ such that $u_0>0$ in $B_{2r_0}(x_2)$. By the $C^{1,\alpha}$ convergence of $u_{r_k}$, there exists a constant $N\in \mathbb{N}$ such that for $k\geq N$, $u_{r_k}>0$ in $B_{r_0}(x_2)$. Note that for each $k\geq N$, $-\Delta u_{r_k}=\lambda_1$ or $-\Delta u_{r_k}=-\lambda_2$ in $B_{r_0}(x_2)$. Choosing a subsequence if necessary, we may assume that $-\Delta u_{r_k}=\lambda_1$ or $-\Delta u_{r_k}=-\lambda_2$ for all $k\geq N$. Thus $-\Delta u_0=\lambda_1$ or $\Delta u_0=-\lambda_2$ in $B_{r_0}(x_2)$. Therefore, in each connected component of $\{u_0>0\}$, either $\nu\equiv \lambda_1$ or $\nu\equiv -\lambda_2$.

Next, we show that $\nu(x)=0$ for almost all $x\in\{u_0=0\}$. Since the set $\{u_0>0\}$ is $C^{1,\alpha}$-smooth, it is easy to see that the Hausdorff dimension of $\{u=0\}\cap\{|\nabla u|\neq 0\}$ is $n-1$, so the Lebesgue measure of $\{u_0=0\}\cap\{|\nabla u_0|\neq 0\}$ is 0. In $\{u_0=0\}\cap\{|\nabla u_0|=0\}$ we have $D^2 u_0(x)=0$ a.e., which implies that $\Delta u_0(x)=0$ for almost all $x\in\{u_0=0\}\cap\{|\nabla u_0|=0\}$. Therefore, $\Delta u_0(x)=0$ for almost all $x\in\{u_0=0\}$.\\
    
	Since $u_0$ is homogeneous of degree 2, $\nu(rx)=\nu(x)$ for all $x\neq 0$ and $r>0$. Therefore $\nu=\lambda_1\I_{D_*^1}-\lambda_2 \I_{D_*^2}$, where $D_*^1$ and $D_*^2$ are open cones, 
	$D_*^1\cap D_*^2=\emptyset$, $\mathbb{R}^n\setminus(D_*^1\cup D_*^2)\subset\{u_0= 0\}$, $u_0>0$ in $D_*^1$, and $u_0=0$ on $\partial D_*^1\cup\partial D_*^2$.   
\end{proof} 
	 
Proposition \ref{P:Class} implies the singular points on the free boundary would be classified into the following two classes:  
\begin{enumerate}
	\item singular points where $u$ has super-characteristic growth (the blow-up limit I), 
	\item singular points where $u$ has characteristic growth (the blow-up limit II).
\end{enumerate}


\section{Singularities of the two-dimensional unstable elliptic free boundary problem } \label{S:Uni-Patch}

The proof of our main results will be accomplished by the following lemmas.  Firstly, we focus on the two-dimensional unstable elliptic free boundary problem 
\be\label{E:VP-1}
\begin{cases}
	-\Delta \psi  = \lambda_1\I_{D} - \lambda_2 \I_{D^c} & \quad \text{ in } \BFR^2, \\
	\nabla( \psi - \frac{\Omega}{2}|x|^2)  \rightarrow 0, & \quad |x| \rightarrow \infty, \\
	\quad \; \; \psi  = 0 & \quad \text{ on } \p D,
\end{cases}
\ee
where $D$ is a bounded domain. Due to the maximum principle, we have $\psi > 0  \text{ in } D$. 
\subsection{Blow-up analysis} 
Suppose $x_0 \in \mathcal{S}^\psi$. It follows from the monotonicity formula 
\be \label{E:Mono-2}
\Phi_{x_0}^\psi(r) = \frac{1}{r^4} \int_{B_r(x_0)} |\nabla \psi |^2 - 2 \psi \big(\lambda_1 \I_D - \lambda_2 \I_{D^c} \big) d\mathcal{H}^2 - \frac{2}{r^5} \int_{\p B_r(x_0)} \psi^2 d \mathcal{H}^1
\ee
is monotone increasing. Proposition \ref{P:Class} implies the following alternatives occur, hence we need to consider case by case. \\

\noindent {\bf Blow-up limit I: } 
If $\Phi_{x_0}^\psi(0+) = - \infty$. One has 
\[
\lim_{r\to 0}r^{-5} \int_{\p B_r(x_0)} \psi^2 d\mathcal{H}^1 = + \infty.
\]
Proposition \ref{P:Class} implies that, as $r \rightarrow 0$, any limit function of
\be
\frac{\psi(x_0 + rx)}{S(x_0,r)}
\ee
must be a homogeneous harmonic polynomial of the form
\be
\frac{ x_1x_2+A(x_1^2- x_2^2)}{ \|x_1x_2+A(x_1^2- x_2^2)\|_{L^2(\p B_1)}},\quad A \in \BFR,
\ee
 where $S(x_0,r) = (\frac{1}{r} \int_{\p B_r(x_0)} \psi^2 d \mathcal{H}^1)^{\frac{1}{2}}$. Up to a rotation, we may assume that $A=0$. Let $D_0$ be the limit of $D_r$, then
 $D_0=\{(x_1,x_2)|x_1x_2>0\}$, or $D_0=\{(x_1,x_2)|x_1>0, x_2>0\}$, or $D_0=\{(x_1,x_2)|x_1<0, x_2<0\}$. \\

\noindent {\bf Blow-up limit II: } 
If $\Phi_{x_0}^\psi (0+) > - \infty$, by Proposition \ref{P:Class} one has
\be
\psi_r (x):= \frac{\psi(x_0 + rx)}{r^2}
\ee
is bounded in $W^{1,2}(B_R)$ for any given $R>0$, and there exists a subsequence $\{\psi_{r_j}\}$ weakly converging to degree two homogeneous  function $\psi_0$ satisfying
\be \label{E:elliptic-1}
\begin{cases}
	-\Delta \psi_0 = \lambda_1 \I_{D_*^1} - \lambda_2 \I_{D_*^2},  & \quad x \in \mathbb{R}^2, \\
	\quad \;  \psi_0 = 0, & \quad x \in \p D_*, 
\end{cases}
\ee
where $D_r:=\{x\; |x_0 + r x \in D \}$ and the blow-up limit $\lambda_1 \I_{D_*^1}-\lambda_2 \I_{D_*^2} := \lim_{j \to \infty}\lambda_1 \I_{D_{r_j}}-\lambda_2\I_{(D_{r_j})^c}$ depends on the extraction of the subsequence $r_j \rightarrow 0$. Let $\psi_0 = r^2 f(\theta)$. {Recall the
polar coordinate formula of the Laplacian $\Delta =\p_{rr} + \frac{1}{r} \p_r + \frac{1}{r^2} \p_{\theta \theta}$.
In any given connected component of $D_*^1$, we solve the equation $f''(\theta)+4f(\theta)=-\lambda_1$ and obtain that $f(\theta)=C_1\sin(2\theta+C_2)-\frac{\lambda_1}{4}$ for some constants $C_1$ and $C_2$. Similarly, in a connected component of $D_*^2$, $f(\theta)=C_3\sin(2\theta+C_4)+\frac{\lambda_2}{4}$. Since $\psi\in C^{1,\alpha}$, we have $f\in C^{1,\alpha}.$

If $D_*^1\neq \emptyset$, then it easy to see that for all $x\in \partial D_*^1\cup \partial D_*^2$ and $x\neq 0$, we have $|\nabla u(x)|\neq 0$. Thus $|\mathbb{R}^2\setminus(D_*^1\cup D_*^2)|=0$. 
A direct computation shows that equation \eqref{E:elliptic-1} is equivalent to  the following ODE concerning $f(\theta)$
\be \label{E:angle-1} 
\begin{cases}
	- f^{\prime \prime}(\theta)  - 4 f(\theta) = \lambda_1, \;\; \theta_{2i} \leq \theta \leq \theta_{2i+1}, \\
	- f^{\prime \prime}(\theta)  - 4 f(\theta) =  - \lambda_2,\;\; \theta_{2i+1} \leq \theta  \leq \theta_{2i+2},\\
	f(\theta_{2i})=f(\theta_{2i+1})=f(\theta_{2i+2})=0,
\end{cases}
\ee
for $i=0,\ldots,N$, where $f$ is positive in $(\theta_{2i},\theta_{2i+1})$. \\

Without loss of generality, we let $\theta_0=0$ and $\theta_{2N+2}=2 \pi$. By the standard ODE technique, we can write down the solution  in each sector explicitly as follows:
{\footnotesize
\be \label{E:solu}
f(\theta) = \begin{cases} 
	- \frac{\lambda_1}{4}+ \frac{\lambda_1}{4\sin2(\theta_{2i+1}-\theta_{2i})} \left( (\sin 2\theta_{2i+1} - \sin 2\theta_{2i}) \cos 2 \theta  + (\cos 2\theta_{2i} -\cos 2 \theta_{2i+1}) \sin 2 \theta  \right), \\
		\quad \quad \quad \text{ for }  \theta \in (\theta_{2i},\theta_{2i+1}), \\
	\frac{\lambda_2}{4} - \frac{\lambda_2}{4\sin2(\theta_{2i+2}-\theta_{2i+1})} \left( (\sin 2\theta_{2i+2} - \sin 2\theta_{2i+1}) \cos 2 \theta  + (\cos 2\theta_{2i+1} -\cos 2 \theta_{2i+2}) \sin 2 \theta  \right),\\
	\quad \quad \quad \text{ for } \theta \in (\theta_{2i+1},\theta_{2i+2}).
 \end{cases}
\ee
}
It is easy to see that $\theta_{2i+1}-\theta_{2i+2}\in(0, \frac{\pi}{2})$, otherwise $f$ will change the sign. On the other hand, $f$ can change its sign in $(\theta_{2i+1},\theta_{2i+2})$. Note that $f'(\theta_{2i+1})<0<f'(\theta_{2i+2})$. If $f<0$ in $(\theta_{2i+1},\theta_{2i+2})$, then we have $\theta_{2i+2}-\theta_{2i+1}\in(0,\frac{\pi}{2})$; otherwise it follows $\theta_{2i+2}-\theta_{2i+1}\in (\pi,\frac{3\pi}{2})$. Since $f\in C^{1,\alpha}$, there is
\be \label{E:angle-comb-1}
\begin{split}
& \frac{\lambda_{1}}{2 \sin 2 \alpha_i} (-1 + \cos 2 \alpha_i) = f^\prime(\theta_{2i+1}) = -\frac{\lambda_{2}}{2 \sin 2 \beta_i} (1 - \cos 2 \beta_i), \\
& \alpha_i:=\theta_{2i+1}-\theta_{2i}, \; \beta_i:=\theta_{2i+2} - \theta_{2i+1}.
\end{split}
\ee
It follows   
\be \label{E:angle-comb}
\frac{\tan \alpha_i}{\tan \beta_i} = \frac{\lambda_2}{\lambda_1}>0, i=0,\ldots,N,
\ee
which are independent of the index $i$. Similarly the continuity of $f^\prime(\theta)$  implies 
\be \label{E:angle-comb-2}
\frac{\tan \alpha_{i+1}}{\tan \beta_i} = \frac{\tan \alpha_{i}}{\tan \beta_{i-1}} = \frac{\lambda_2}{\lambda_1}>0, i=1,\ldots,N-1.
\ee

Equalities \eqref{E:angle-comb} and \eqref{E:angle-comb-2} follow these angles $\alpha_i$ coincide with each other and $|\beta_i-\beta_j|\in\{0,\pi\}$ for all $i,j$. Therefore, we have the following alternatives:
\begin{enumerate}
	\item There exists a super-harmonic sector of $\psi_0$ with the opening angle denoted by $\beta >\pi$ without loss of generality. The complement consists of $N$ sectors with opening angle $\alpha$ and other $N-1$ sectors with opening angle $\beta-\pi$ which occur alternatively. Here $\alpha$ and $\beta$ satisfy 
	\[
	\alpha+\beta = \frac{N+1}{N} \pi \; \text{ and } \; \frac{\tan \alpha}{\tan \beta} = \frac{\lambda_2}{\lambda_1}.
	\]  
	In particular, the pattern is $Z_2$-symmetric. 
	\item The pattern is $N$-fold symmetric which super-harmonic and sub-harmonic domains of $\psi_0$, denoted by $\{(r,\theta) \in \R^2 \big| r>0, 0< \theta< \alpha\}$ and $\{(r,\theta) \in R^2 \big| r>0, \alpha<\theta< \alpha+\beta\}$ and their copies by rotating $\frac{N}{2\pi}k, k=1,\ldots,N-1$ respectively, occur alternatively, where $N(\alpha+\beta)=2\pi$.
	\end{enumerate} 
\vspace{0.3 cm}

If $D_*^1=\emptyset$, we can easily show that one of the following holds:
\begin{enumerate}
\item  $D_*^2=\emptyset$, then $\phi_0\equiv 0$.
\item  $\overline{D_*^2}=\mathbb{R}^2$, then $\phi_0(x_1,x_2)=Ax_1^2-Bx_2^2$ after a rotation, where $A>0$ and $B\geq 0$ are constants such that $A-B=\frac{\lambda_2}{2}$.
\item  $D_*^2\neq\emptyset$, $\overline{D_*^2}\neq\mathbb{R}^2$, then $\psi_0(x_1,x_2)=\frac{\lambda_2}{2}(x_1^+)^2$ after a rotation, where $x_1^+$ denotes the positive part of $x_1$.
\end{enumerate} 
Due to the above discussion, we can show
\begin{lemma} \label{L:ODE}
Suppose $x^0 \in S^\psi$ is a singular point. Regarding the blow-up limit (2) in Proposition \ref{P:Class}, for the equation \eqref{E:angle-1} we have the following conclusions: \\
1. If $D_*^1\neq\emptyset$, then
	\begin{enumerate}	
	\item Both of $D_*^1$ and $D_*^2$ consist $N(\geq 3)$ open cones, which are distributed alternately, and $|\mathbb{R}^2\setminus\left(D_*^1\cup D_*^2\right)|=0$.
	
	\item About opening angles of these cones, one of the following two alternatives holds:
	\begin{enumerate}
		\item The opening angles of all cones in $D_*^1$ are $\alpha\in(0,\frac{\pi}{2})$, and the opening angle of all cones in $D_*^2$ is $\beta\in(0,\frac{\pi}{2})$ (Figure \ref{fig:b});
		\item The opening angles of all cones in $D_*^1$ are $\alpha\in(0,\frac{\pi}{2})$,  $N-1$ cones of $D_*^2$ have the same open angle $\beta\in (0,\frac{\pi}{2})$ and the remaining one has an open angle $\beta+\pi$, while $\psi_0$ changes its sign in the cone with an open angle $\beta+\pi$ (Figure \ref{fig:c}).  
	\end{enumerate} 
    $\alpha$ and $\beta$ depend on $\lambda_1$, $\lambda_2$ and $N$.
	\end{enumerate} 
2. If $D_*^1=\emptyset$, then one of the following holds:
\begin{enumerate}
\item  $D_*^2=\emptyset$, then $\psi_0\equiv 0$.
\item  $\overline{D_*^2}=\mathbb{R}^2$, then $\psi_0(x_1,x_2)=Ax_1^2-Bx_2^2$ after a rotation, where $A>0$ and $B\geq 0$ are constants such that $A-B=\frac{\lambda_2}{2}$.
\item  $D_*^2\neq\emptyset$, $\overline{D_*^2}\neq\mathbb{R}^2$, then $\psi_0(x_1,x_2)=\frac{\lambda_2}{2}(x_1^+)^2$ after a rotation.
\end{enumerate} 
\end{lemma} 
\begin{figure}[htbp]   
\centering   

\subfigure[] { \label{fig:b}
	\includegraphics[width=0.232\textwidth]{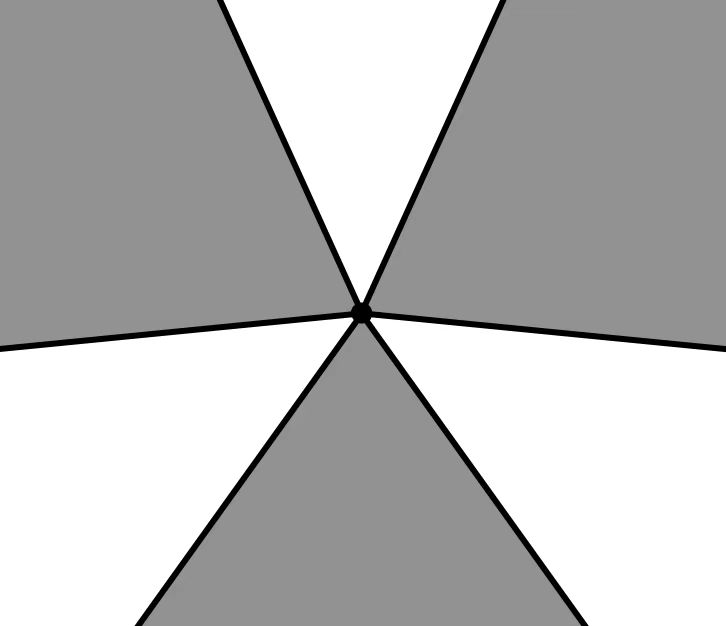}
}		
\subfigure[]{ \label{fig:c}
	\includegraphics[width=0.2\textwidth]{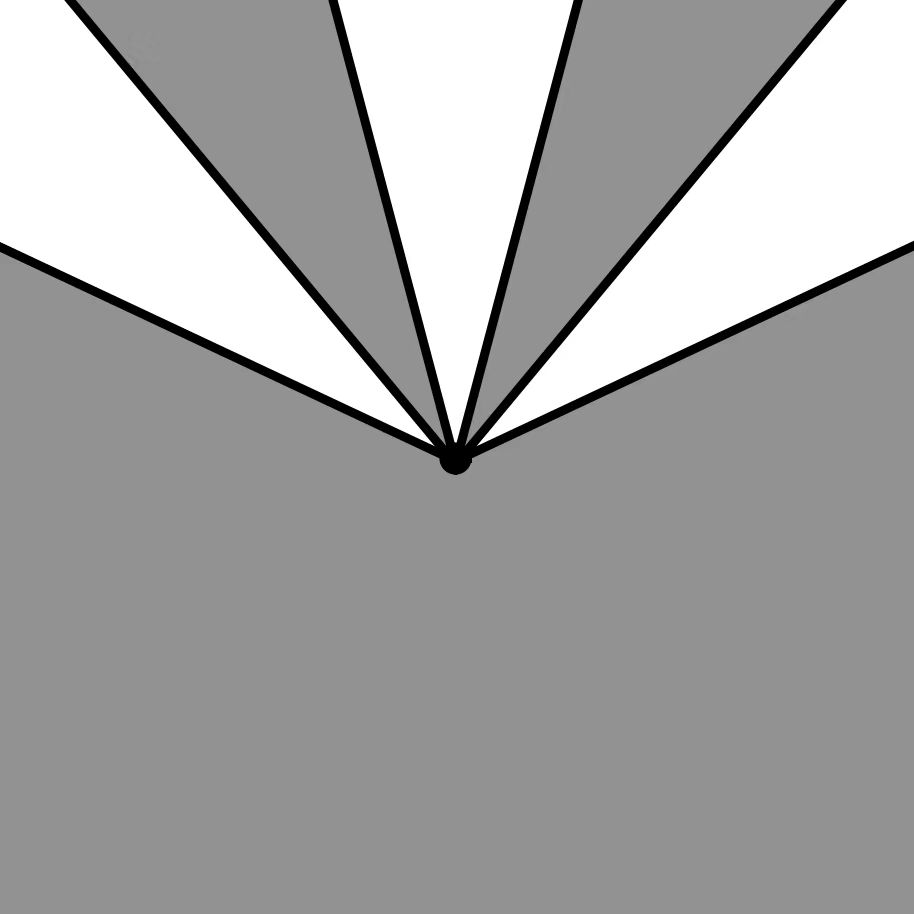}
}		

$D_*^1$: white, $\ \ $$D_*^2$: grey
\caption{Classification of the blow-up limits II}

\end{figure}
\vspace{0.4 cm}
We summarize the above conclusions in the following theorem.
\begin{theorem}\label{T:main-1} 
	Suppose $D \subset \R^n$ is a bounded domain and $(u,D)$ solves the {two-dimensional} unstable elliptic free boundary problem 
	\be 
	\begin{cases}
		-\Delta \psi  =  \lambda_{1}\mathbf{I}_{D} - \lambda_2 \mathbf{I}_{D^c} & \quad \text{ in } \mathbb{R}^2, \\
		\psi  =0 & \quad \text{ on } \p D.
	\end{cases}, \quad \lambda_1,\lambda_2>0.
	\ee
Then the regular part of the free boundary $\p D\setminus \mathcal{S}$ is locally real-analytic, where
\[
\mathcal{S} := \{ x \in \p D\; \big| \;|\nabla \psi(x)| = 0 \}.
\] 	
For any given $x_0\in\mathcal{S}$, one of the following holds,
\begin{enumerate}
    \item $\lim_{r\to 0}r^{-5} \int_{\p B_r(x_0)} u^2 d\mathcal{H}^1 \to +\infty$ as $r\to 0+$,  
    \[
    \phi_r(x): =\frac{\psi(x_0+rx)}{S(x_0,r)}
    \]
    is bounded in $W_{loc}^{1,2}(\mathbb{R}^n)$, and each limit of $\phi_r(x)$
    as $r\to 0$ is a homogeneous harmonic polynomial of degree two, where
    \[
    S(x_0,r):=\left(r^{-1}\int_{\partial B_r(x_0)}\psi^2d\mathcal{H}^{1}\right)^{\frac{1}{2}}.
    \]
    \item  $\limsup_{r\to 0}r^{-5} \int_{\p B_r(x_0)} \psi^2 d\mathcal{H}^{1}<\infty$,  
    \[
    \psi_r(x):=\frac{\psi(x_0+rx)}{r^2},
    \]
    is bounded in $W_{loc}^{1,2}(\mathbb{R}^2)$, and each limit of $\psi_r(x)$ as $r\to $ is a degree two homogeneous function satisfying 
    \begin{equation*}
		\begin{cases}
			-\Delta \psi_{0}=\lambda_{1}\mathbf{I}_{D_*^1}-\lambda_{2}\mathbf{I}_{D_{*}^{2}}  & \text{ in }\mathbb{R}^{n},\\
			\quad \quad \psi_0   =0 & \text{ on }  \partial D_{*}^1 \cup \partial D_*^2,
		\end{cases}
		\end{equation*}
where $D_*^1$ and $D_*^2$ are open cones satisfying $D_*^1\cap D_*^2=\emptyset$, $\mathbb{R}^2\setminus(D_*^1\cup D_*^2)\subset\{\psi_0= 0\}$ and $\psi_0>0$ in $D_*^1$. The complete classification of the solutions $\psi_0$ is given in Lemma \ref{L:ODE}.
\end{enumerate}
\end{theorem}

The proof concerned with the regular portion is standard thus we just outline it here.
	
By the standard elliptic regularity theorem, we have $u \in C^{1,\alpha}$ for $0<\alpha<1$ due to \eqref{E:Sign-ch}. For the free boundary $\p D$ near a regular point $x_0 \in \p D$, i.e., $|\nabla u(x_0)| \neq 0$ and $u(x_0)=0$, by applying the implicit function theorem we have $\p D$ is locally a $C^{1,\alpha}$ curve near $x_0$, namely there exists $r>0$ such that $\p D \cap B_r(x_0)$ is parameterized by the graph of a $C^{1,\alpha}$ function. Following the idea in the proof of Theorem 3.1 of \cite{KNS1978}, we apply the hodograph transform and the local real-analyticity of the curve follows. 

\begin{remark}
    If $D$ is unbounded, under the assumption that $u>0$ in $D$, the same conclusions in Theorem \ref{T:main-1}  also hold.
\end{remark}

Moreover, suppose that the domain $D$ is Lipschitz, then the second case in Theorem \ref{T:main-1} can be ruled out. 
\begin{proposition}\label{C:Singularity-limit}
Under the assumptions of Theorem \ref{T:main-1}, if $D$ is Lipschitz and $x_0\in\partial D$ is a singular point, then the blow-up limit II will not occur.   
\end{proposition} 
\begin{proof}
Suppose towards a contradiction that we have a blow-up limit II at some singular point $x_0\in\partial D$. Without loss of generality we may assume that $x_0=0$ and the boundary of $D$ near $x_0$ can be written as $\{(x_1,x_2)\;|\;x_1\in(-\delta,\delta),x_2=f(x_1)\}$, where $f$ is a Lipschitz function and $L$ is the Lipschitz constant. Then we have $\{(x_1,x_2)\;|\; x_1\in(-\delta,\delta),f(x_1)<x_2<\delta\}\subset D$ and $\{(x_1,x_2)\;|\; x_1\in(-\delta,\delta),-\delta<x_2\leq f(x_1)\}\subset D^c$,  Suppose that $\{\psi_{r_j}\}=\frac{\psi(r_j x)}{r_j^2}$ is a blow-up sequence converging to a function $\psi_0$ that is homogeneous of degree 2, with
\[
-\Delta\psi_0=\lambda_1\I_{D_*^1}+\lambda_2\I_{D_*^2},
\]
where $D_*^1$ and $D_*^2$ are open cones satisfying $D_*^1\cap D_*^2=\emptyset$, $\mathbb{R}^2\setminus(D_*^1\cup D_*^2)\subset\{\phi_0=0\}$ and $\psi_0=0$ on $\partial D_*^1\cup\partial D_*^2$. It is easy to see that $\psi_{r_j}$ satisfies 
\[
-\Delta\psi_{r_j}=\lambda_1\I_{D_j}+\lambda_2\I_{D_j^2},
\]
where $D_j:=\{x\in\mathbb{R}^2:r_j x\in D\}$ and the boundary of $D_j$ can be represented by the graph of a Lipschitz function $f_j$, where $f_j(x_1)=\frac{f(r_j x)}{r_j}$. Moreover, we have
$\{(x_1,x_2)\;|\;x_1\in(-\delta r_j^{-1},\delta r_j^{-1}),f_j(x_1)<x_2<\delta r_j^{-1}\}\subset D_j$ and $\{(x_1,x_2)\;|\; x_1\in(-\delta r_j^{-1},\delta r_j^{-1}),-\delta r_j^{-1}<x_2\leq f_j(x_1)\}\subset D_j^c$. Since $\|f_j\|_{C^{0,1}}\leq L$ for all $j\in\mathbb{N}$, passing if necessary to a subsequence,  $f_j$ converges uniformly to a Lipschitz function $f_0$ with Lipschitz constant $L$. Let $x=(x_1,x_2)$ such that $x_2>f_0(x_1)$. It follows from the uniform convergence of $f_j$ that there exists a constant $\tau>0$ such that $B_\tau(x)\subset D_j$ for all large $j$. Thus $\psi_{r_j}>0$ and $-\Delta \psi_{r_j}=\lambda_1$ in $B_{\tau}(x)$. We conclude that $\psi_0\geq 0$ and $-\Delta\psi_0=\lambda_1$ in $B_{\tau}(x)$. By the maximal principle, we have $\psi_0>0$ in $B_{\tau}(x)$. Since $x$ is arbitrary, it follows that 
\[
\{(x_1,x_2)\;|\; x_2>f_0(x_1)\}\subset D_*^1.
\]
Similarly, we have $\{(x_1,x_2)\;|\; x_2<f_0(x_1)\}\subset D_*^2$.
Therefore 
\[
D_*^1=\{(x_1,x_2)\;|\; x_2>f_0(x_1)\}, D_*^2=\{(x_1,x_2)\;|\;x_2<f_0(x_1)\},
\]
 which leads to a contradiction with Lemma \ref{L:ODE}.
\end{proof}

\begin{corollary}
Suppose that $D$ is a bounded domain in $\mathbb{R}^2$ with Lipschitz boundary, and $(D,\psi)$ is a solution to the uniformly rotating vortex patch problem with $\Omega\in(0, \frac{1}{2})$:
\begin{equation*}
\begin{cases}
	- \Delta \psi = \I_D - 2 \Omega, \quad & x \in \BFR^2, \\ 
	\nabla(\psi - \frac{\Omega}{2}|x|^2) \rightarrow 0, \quad & |x| \rightarrow \infty, \\
	\psi = 0, \quad & x \in \p D.
\end{cases}
\end{equation*}
Then for each $x_0\in \partial D\cap\{|\nabla \psi|=0\}$, we have $$\lim_{r\to 0}r^{-5} \int_{\p B_r(x_0)} \psi^2 d\mathcal{H}^1 \to +\infty$$ as $r\to 0$ and
    \[
    \psi_r(x): =\frac{u(x_0+rx)}{S(x_0,r)}
    \]
    is uniformly bounded in $W_{loc}^{1,2}(\mathbb{R}^2)$. Moreover, up to sequence of $r$, each limit $\psi_0$ of $\psi_r(x)$
    as $r\to 0$ is a homogeneous harmonic polynomial of degree two, where
    \[
    S(x_0,r):=\left(r^{-1}\int_{\partial B_r(x_0)}u^2d\mathcal{H}^1\right)^{\frac{1}{2}}.
    \]
\end{corollary}
}
\vspace{1 cm}

\subsection{Uniform regularity} \label{SS:Unifom}

The Weiss-type monotonicity formula, as well as the classification of the blow-up limits of singular points, provide some geometric information concerning the $0$-level set of the blow-up limit function. In general, the limits may not be unique, i.e., the limiting solution may depend on the choice of subsequence $\{r_j\}_{j \in \mathbb{N}}$. 

For the singular points where $\psi$ has super characteristic growth ({\bf the blow-up limit (I)}), we further prove that the blow-up limit is unique and the uniform regularity of the free boundary $\p D$ near the singularities in this subsection. The situation is similar to Theorem A established in \cite{ASW2010}, where the proof essentially depends on the uniqueness of the generalized Newtonian potential of the function $- \mathbf{I}_{\{x_1x_2>0\}}$. 

It suffices for us to consider the specific equation 
\[
-\Delta \psi = \I_D - 2 \Omega, \quad \psi = 0 \text{ on } \p D 
\]
where $D$ is simply-connected. Because the free boundary near the singular points of blow-up limit (1) is always cross-like, i.e., they consist of sectors with the opening angle $\frac{\pi}{2}$ which is independent of the values $\lambda_1,\lambda_2>0$. 
\begin{lemma} \label{L:Uniform}
	Let $\psi$ be a solution of \eqref{E:VP-1}  and 
	\[
	S^\psi(x^0,r) \geq \frac{r^2}{\delta}
	\]
	for some $0<\delta \leq \delta_0$, $0<r \leq r_0$, $\sup_{B_3(x_0)} |\psi| \leq M$ and $\psi(x^0)= \left|\nabla \psi(x^0)\right| = 0$ then:
	\begin{enumerate}
		\item There exists a second order homogeneous harmonic polynomial $p^{x^0,\psi} =p$ such that for each $\alpha \in (0,\frac{1}{2})$ and each $\beta \in (0,1)$,
		\be
		\left\| \frac{\psi(x^0+sx)}{\sup_{B_s(x^0)} |\psi|} - p \; \; \right\|_{C^{1,\beta}(\overline{B}_1)} \leq C(M,\alpha,\beta) \left( \frac{\delta}{1+\delta\log(\frac{r}{s})} \right)^\alpha.
		\ee
		\item The set $\{\psi=0\} \cap B_{r_0}(x^0)$ consists of two $C^1$-curves intersecting each other at right angles at $x^0$.
	\end{enumerate}
\end{lemma}
It suffices for us to consider the generalized Newtonian potential    
\be \label{E:Poten}
-\Delta z =  \mathbf{I}_{\mathbf{C}} - 2 \Omega, \quad \mathbf{C}:= \{(r,\theta) \subset \BFR^2 \big| r>0, 0< \theta < \frac{\pi}{2}\}
\ee
Note that the right-hand side of \eqref{E:Poten} is of class $L^\infty$ and homogeneous of degree zero, by \cite{KM1996}, $z$ is uniquely representable in the form of 
\[
z(x) = p(x) \log |x| + |x|^2 \phi(\frac{x}{|x|}) + q_1(x), \quad x \in \BFR^2,
\]
where $p$ is a homogeneous harmonic polynomial of degree two, $\phi \in C^1(S^1)$ and $q_1(x) =a \cdot x +b$. Suppose $z(0)=\nabla z(0) =0$. It is sufficient to compute in the form of class $p(x) \log|x| + |x|^2 \phi(\frac{x}{|x|})$. Consider \eqref{E:Poten} 
\[
-(\p_{rr} + \frac{1}{r}\p_r + \frac{1}{r^2} \p_{\theta \theta}) z = \begin{cases}
	1- 2\Omega  \quad 0<\theta<\frac{\pi}{2} \\
	-2 \Omega \quad \frac{\pi}{2}<\theta< 2 \pi 
\end{cases} 
\] 
in the polar coordinate in which $z$ is of the form 
\[
\begin{split}
	z(x_1,x_2) & = \left(A(x_1^2 - x_2^2) + 2Bx_1 x_2\right) \log|x| + |x|^2 \phi(\frac{x}{|x|}) \\
	&= r^2 \log r \left( A \cos 2 \theta + B \sin 2 \theta \right) + r^2 \phi(\theta)   
\end{split}
\]
with constants $A,B \in \BFR$. A straightforward computation shows that 
\[
- 4( A \cos 2 \theta + B \sin 2 \theta) -  \phi^{\prime \prime}(\theta) - 4 \phi(\theta)  = \begin{cases}
	1-2\Omega \quad 0<\theta<\frac{\pi}{2} \\
	-2 \Omega \quad \frac{\pi}{2}<\theta< 2 \pi 
\end{cases}  
\]
Consider the Fourier series 
\[
\phi(\theta) = \frac{A_0}{2} + \sum_{n=1}^\infty A_n \cos n \theta + B_n \sin n \theta. 
\]
For $n \neq 2$, one has 
\[
A_0 = \frac{1-2\Omega}{2} - 3\Omega,\; A_n = \frac{1}{n \pi } \sin n \frac{\pi}{2}, \; B_n =  \frac{1}{n \pi } \left(1 - \cos n \frac{\pi}{2} \right), n \geq 1, n \neq 2
\] 
and
\[
A=0, \;\; B =\frac{1}{2 \pi}.
\]
Note that $r^2(A_2 \cos 2 \theta + B_2 \sin 2 \theta)$ is a harmonic function. We let 
\be \label{E:Potential} 
\begin{split}
	z(r,\theta) & = r^2 \left( \frac{1 - 8 \Omega}{4} + \frac{1}{\pi}\sum_{n \geq 1, n \neq 2} \frac{1}{n} \left(\sin  \frac{n\pi}{2} \cos n \theta + (1-\cos  \frac{n \pi}{2}) \sin n \theta\right) \right) \\
	& + \frac{r^2 \log r}{2\pi} \sin 2 \theta -\frac{1}{4 \pi} r^2 \sin 2 \theta.
\end{split}
\ee

\begin{lemma} \label{L:Gen-pon}
The potential $z$ given in \eqref{E:Potential} is the unique solution of equation \eqref{E:Poten} satisfying 
\begin{enumerate}
\item $z(0)=|\nabla z(0)|=0$,
\item $\lim_{x \rightarrow \infty} \frac{z(x)}{|x|^3}=0$,
\item $\Pi(z)=0$,
\item $\Pi(z_{\frac{1}{2}}) = \frac{\log 2}{\pi} x_1 x_2$, 
\item $\tau(z_{\frac{1}{2}}) = \frac{\log 2}{ \pi}$,
\end{enumerate} 
where $\mathbb{P}$ is the space of second-order homogeneous harmonic polynomials and $\Pi:W^{2,2}(B_1)\rightarrow \mathbb{P}$ is a projection such that $\Pi(u)$ is the unique minimizer of  
\[
p \rightarrow \int_{B_1} \left| D^2 v -D^2 p \right|^2
\]  
on $\mathbb{P}$ and $\tau(u) \geq 0$ is defined by $\Pi(u) = \tau(u) p, \; p \in \mathbb{P}, \;\;\sup_{B_1}|p|=1$. 
\end{lemma}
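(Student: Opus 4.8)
The plan is to prove the lemma in two independent parts: a verification that the explicit field $z$ from \eqref{E:Potential} satisfies (1)--(5), and a uniqueness argument showing that (1)--(3) already determine $z$. The existence of a solution of \eqref{E:Poten} of the stated Kozlov--Maz'ya form is in hand from the construction preceding the lemma, so nothing more is needed there. Properties (1) and (2) I would read straight off the representation $z=r^2(\cdots)+\tfrac{r^2\log r}{2\pi}\sin 2\theta-\tfrac1{4\pi}r^2\sin 2\theta$: each summand is $O(r^2\log r)$ near the origin, so $z(0)=0$ and $\nabla z(0)=0$ (the gradient is $O(r\log r)$), while near infinity each summand is $o(|x|^3)$, giving $z(x)/|x|^3\to 0$.

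The substance of the verification is property (3). Since $\Pi$ is the orthogonal projection onto $\mathbb{P}=\mathrm{span}\{x_1^2-x_2^2,\,2x_1x_2\}$ for the Hessian inner product $\langle u,v\rangle=\int_{B_1}D^2u:D^2v$, it is linear and fixes $\mathbb{P}$ pointwise, and its two coordinates test $D^2z$ against the constant matrices $D^2(x_1^2-x_2^2)=\mathrm{diag}(2,-2)$ and $D^2(2x_1x_2)$. I would first show that for $u=r^2g(\theta)$ the divergence theorem on $B_1$ gives $\langle u,\,x_1^2-x_2^2\rangle$ and $\langle u,\,2x_1x_2\rangle$ proportional to the $\cos 2\theta$ and $\sin 2\theta$ Fourier coefficients of $g$; as the regular part of \eqref{E:Potential} carries only modes $n\neq 2$, it is therefore $\Pi$-orthogonal to $\mathbb{P}$. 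This collapses $\Pi(z)$ to the projection of the logarithmic term $\tfrac1{2\pi}r^2\log r\,\sin 2\theta=\tfrac1\pi x_1x_2\log|x|$, and the correction $-\tfrac1{4\pi}r^2\sin 2\theta$ is designed precisely to cancel it. Properties (4) and (5) then come essentially for free from the scaling $z_{1/2}(x)=4z(x/2)$, under which the homogeneous (non-logarithmic) terms are invariant and only the logarithm shifts, giving $z_{1/2}=z-\tfrac{\log 2}{\pi}x_1x_2$; by linearity and $\Pi(z)=0$ one gets $\Pi(z_{1/2})=-\tfrac{\log 2}{\pi}\Pi(x_1x_2)$, a fixed multiple of $x_1x_2$, and reading off its $\sup_{B_1}$-normalization yields $\tau(z_{1/2})$, matching the constants recorded in (4)--(5).

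For uniqueness, let $\tilde z$ be any solution of \eqref{E:Poten} obeying (1)--(3) and set $w=z-\tilde z$. Both fields solve the same inhomogeneous equation, so $-\Delta w=0$ in all of $\BFR^2$ in the distributional sense; by Weyl's lemma $w$ is an entire harmonic function (consistent with the $C^{1,\alpha}$ regularity already established). Property (2) forces $w(x)=o(|x|^3)$, so interior derivative estimates for harmonic functions on enlarging balls make the third- and higher-order derivatives of $w$ vanish and annihilate the homogeneous cubic part, leaving $w$ a harmonic polynomial of degree $\le 2$; property (1) then removes the constant and linear parts, so $w\in\mathbb{P}$. Finally, applying $\Pi$ and using $\Pi|_{\mathbb{P}}=\mathrm{id}$ together with property (3) gives $w=\Pi(w)=\Pi(z)-\Pi(\tilde z)=0$, hence $z=\tilde z$.

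The only step with real computational content is the cancellation in property (3): evaluating the Hessian projection of $x_1x_2\log|x|$ over $B_1$ and checking that $-\tfrac1{4\pi}r^2\sin 2\theta$ removes it exactly, which is what pins down the constant $\tfrac1{4\pi}$. Once that is settled, (4)--(5) and the whole uniqueness argument are formal. The subtlety I would guard against lies in the Liouville step: the difference $w$ is only known a priori in $W^{1,2}_{loc}\cap C^{1,\alpha}$, so one must pass to classical harmonicity via Weyl's lemma before invoking the interior estimates, and one must use the little-$o$ growth in (2) (not merely a big-$O$ bound) to exclude a nonzero cubic term.
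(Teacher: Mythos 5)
Your proposal matches the paper's proof in all essentials: the paper likewise verifies (3) by writing out the first-order conditions for the minimizing projection (which reduce to showing $\int_{B_1}\partial_{12}z=0$ and $\int_{B_1}(\partial_{11}z-\partial_{22}z)=0$, where only the $\frac{1}{\pi}x_1x_2\log|x| - \frac{1}{2\pi}x_1x_2$ part contributes — exactly your ``one step with real computational content''), obtains (4)--(5) from the scaling identity $s^{-2}z(sx)=z(x)-\frac{x_1x_2}{\pi}\log s$, and proves uniqueness by observing that the difference of two solutions is a second-order harmonic polynomial killed by (1) and (3), which your Weyl-lemma/Liouville argument merely spells out in more detail. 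The one slip to fix: taking $s=\tfrac12$ in the scaling identity gives $z_{\frac12}=z+\frac{\log 2}{\pi}x_1x_2$ (since $\log\tfrac12=-\log 2$), not $z-\frac{\log 2}{\pi}x_1x_2$, and your sign would yield $\Pi(z_{\frac12})=-\frac{\log 2}{\pi}x_1x_2$, contradicting both item (4) and the requirement $\tau\geq 0$.
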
 

\begin{proof}
Suppose $h:=\Pi(z)$. Since $h$ is harmonic, we let
\[
D^2 h = \begin{pmatrix} 
	a & b \\
	b & -a 
	\end{pmatrix}. 
\] 
Since $h$ is the minimizer of the mapping $p \rightarrow \int_{B_1} \left| D^2 v -D^2 p \right|^2$, we have 
\[
\begin{split}
0 & = \p_b  \int_{B_1} \left| D^2 v -D^2 h \right|^2 = 4b - 4 \int_{B_1} \p_{12} z \\
& = 4b - \int_{0}^1 \int_{S^1} \bigg(\frac{\sin2 \theta}{2}\left(\p_{rr} z + \frac{1}{r^2} \p_{\theta \theta} \right) z(r,\theta)- \frac{1}{r} \p_{r \theta} z(r,\theta) \\
& + \frac{1}{r^2} \p_\theta z(r,\theta) + \frac{\sin 2 \theta }{2r}\p_r z(r,\theta) \bigg) r dr d \theta. 
\end{split}
\] 
Being aware of that only $\frac{r^2 \log r}{2\pi} \sin 2 \theta -\frac{1}{4 \pi} r^2 \sin 2 \theta$ provides non-zero integrals, we have $b=0$ since 
\[
\begin{split}
& \int_{B_1} \p_{12} \left(\frac{\log(x_1^2 +x_2^2)}{2\pi} x_1 x_2 -\frac{1}{2\pi} x_1x_2 \right) dx_1 dx_2\\
&  = - \frac{1}{2} + \frac{1}{2 \pi} \int_{B_1} \left(2 + \log(x_1^2 +x_2^2) - \frac{4x_1^2x_2^2}{(x_1^2 +x_2^2)^2} \right) dx_1 dx_2 =0, 
\end{split} 
\]
while $a=0$ is obtained similarly by considering 
\[
0 = \p_a  \int_{B_1} \left| D^2 v -D^2 h \right|^2 = 4a. 
\]
Therefore one has $h:=\Pi(z)=0$ due to $h$ being a second-order polynomial. Moreover, we have 
\[
\frac{z(sx_1, sx_2)}{s^2}=z(x_1,x_2) - \frac{x_1 x_2}{\pi} \log s. 
\]
Then it follows 
\[
\Pi(z_{\frac{1}{2}}) = \Pi(z) - \Pi \left( \frac{x_1x_2 \log\left(\frac{1}{2}\right)}{\pi} \right) = \frac{\log 2}{\pi} x_1 x_2.
\]

If $z_1$ and $z_2$ are solutions of \eqref{E:Potential}, the $\eta:=z_1 -z_2$ is a second-order harmonic polynomial then $|\nabla \eta (0)|= \eta(0)=0$ and $\Pi(\eta)=0$ imply $z_1=z_2$,  namely $z$ is the unique solution. 

\end{proof}

\begin{lemma} 
Suppose $\psi$ solves \eqref{E:VP-1}, $x_0 \in \mathcal{S}^u$ and  $\sup_{B_{3}(x_0)} |u| \leq M <+\infty$. Then 
\[
\left( \int_{B_1} \left| D^2 \frac{u(x^0+rx)}{r^2} - D^2 \Pi \left( \frac{u(x^0+rx)}{r^2} \right) \right|^p \right)^{\frac{1}{p}} \leq C(M,p)
\]
and 
\[
\left\| \frac{u(x^0+rx)}{r^2} - \Pi\left(\frac{u(x^0+rx)}{r^2} \right) \right\|_{C^{1,\beta}(\overline{B}_1)} \leq C(M,\beta). 
\]
\end{lemma}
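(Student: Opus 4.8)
Write $u_r(x):=u(x^0+rx)/r^2$ and $w_r:=u_r-\Pi(u_r)$. Since $x_0\in\mathcal{S}^u$ we have $u_r(0)=|\nabla u_r(0)|=0$, and since every element of $\mathbb P$ is harmonic, $-\Delta w_r=-\Delta u_r=\I_{D_r}-2\Omega=:g_r$ with $\|g_r\|_{L^\infty}\le\max\{\lambda_1,\lambda_2\}$, while $\Pi(w_r)=0$ by idempotency of the projection. The plan is first to reduce both estimates to a single scale-uniform bound $\sup_{B_{3/2}}|w_r|\le C(M)$, and then to establish that bound by a dyadic comparison with the explicit generalized Newtonian potential $z$ of Lemma \ref{L:Gen-pon}.

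For the reduction, let $v_r$ be the Newtonian potential of $g_r\I_{B_2}$, normalized by $v_r(0)=|\nabla v_r(0)|=0$; Calder\'on--Zygmund theory gives $\|D^2v_r\|_{L^p(B_2)}\le C_p\|g_r\|_{L^\infty}\le C_p$, and for $p>2$ the Sobolev embedding $W^{2,p}\hookrightarrow C^{1,\alpha}$ upgrades this to $\|v_r\|_{C^{1,\alpha}(B_{3/2})}\le C_p$. Then $h_r:=w_r-v_r$ is harmonic in $B_2$, so interior estimates for harmonic functions bound every $C^{k}(B_1)$ norm of $h_r$ by $\sup_{B_{3/2}}|h_r|$. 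Consequently, once $\sup_{B_{3/2}}|w_r|\le C(M)$ is known, one gets $\|h_r\|_{C^{1,\beta}(\ov B_1)}+\|D^2h_r\|_{L^p(B_1)}\le C(M)$, and adding the bound for $v_r$ yields both conclusions of the Lemma (the small-$p$ case following from the large-$p$ one by H\"older on the bounded ball).

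The heart of the matter is therefore the $r$-independent bound on $w_r$, and here $z$ is the model. By the scaling identity in Lemma \ref{L:Gen-pon}, for $z_r(x):=z(rx)/r^2=z(x)-\tfrac{\log r}{\pi}x_1x_2$ the entire logarithmic growth sits in the quadratic projection, $\Pi(z_r)=-\tfrac{\log r}{\pi}x_1x_2$, so that $z_r-\Pi(z_r)=z$ is independent of $r$; this is precisely the mechanism by which subtracting $\Pi(u_r)$ is expected to cancel the $\log(1/r)$--growth of $D^2u_r$. To exploit it I would run a dyadic induction. Using that $D^2\Pi(f)$ is the trace-free part of the mean $\tfrac{1}{|B_1|}\int_{B_1}D^2f$, together with the half-scaling identity $D^2u_{r/2}(x)=(D^2u_r)(x/2)$, the per-scale drift of the projection equals the trace-free part of $\tfrac{1}{|B_{1/2}|}\int_{B_{1/2}}D^2u_r-\tfrac{1}{|B_1|}\int_{B_1}D^2u_r$, which is controlled by the oscillation of $D^2w_r$ and hence, through the reduction above, by $\sup_{B_{3/2}}|w_r|$ plus $\|g_r\|_{L^\infty}$. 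Comparing this drift with the model increment $\tfrac{\log2}{\pi}x_1x_2$ of Lemma \ref{L:Gen-pon}(4) closes the scheme: the projections $\Pi(u_{2^{-j}})$ move by a bounded amount per scale while the remainders $w_{2^{-j}}$ stay uniformly bounded, the induction being anchored at $r=1$, where $\sup_{B_2}|u_1|\le M$ supplies the base case.

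The main obstacle is exactly this last step, since the estimate is genuinely global-in-scale rather than a pointwise elliptic bound: $D^2u_r$ does grow like $\log(1/r)$, so no uniform $W^{2,p}$ bound on $u_r$ itself can hold, and the content of the Lemma is that this growth is entirely absorbed by the quadratic projection. The difficulty is the circular dependence—the Calder\'on--Zygmund/drift control of $w_r$ at scale $r$ presupposes a bound on $w_r$ at the coarser scale $2r$—which is resolved only by the inductive comparison with $z$; the uniqueness and structure of $z$ recorded in Lemma \ref{L:Gen-pon} (in particular $\Pi(z)=0$ and the per-scale increment $\Pi(z_{1/2})=\tfrac{\log2}{\pi}x_1x_2$) is what makes the induction self-improving rather than yielding only the weaker bound $\sup_{B_1}|w_r|\lesssim\log(1/r)$.
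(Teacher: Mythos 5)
Your reduction of both estimates to the single scale-uniform bound $\sup_{B_{3/2}}|w_r|\le C(M)$ is sound (Calder\'on--Zygmund for the potential part, interior estimates for the harmonic remainder), but that bound \emph{is} the content of the lemma, and your dyadic scheme does not prove it. By your own accounting, each halving of scale moves the projection by a bounded harmonic quadratic and hence changes the remainder by a bounded amount, so the induction as stated delivers only $\sup_{B_1}|w_{2^{-j}}|\lesssim j$, i.e.\ exactly the weaker $\log(1/r)$ bound you acknowledge. The claimed self-improvement ``by comparison with $z$'' has no basis at this point: the lemma carries no hypothesis that $\{u>0\}$ resembles the cross $\{x_1x_2>0\}$ --- that hypothesis enters only in Lemma \ref{L:Control}, which is where $z$ legitimately appears. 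The present lemma must hold at \emph{every} singular point $x_0\in\mathcal{S}^u$, including points of characteristic growth (case (2) of Proposition \ref{P:Class}), where $u_r$ stays bounded, $\Pi(u_r)$ does not drift at all, and the blow-up is a homogeneous solution of \eqref{eq:vp-limit} rather than $z$; and even at a super-characteristic point you have no a priori information that the per-scale drift aligns with the model increment $\frac{\log 2}{\pi}x_1x_2$ of Lemma \ref{L:Gen-pon} --- that alignment is an output of the regularity theory being built, so invoking it here is circular. (For context: the paper states this lemma without proof, following the corresponding lemma in \cite{ASW2010}.)

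There are two standard ways to close the gap. (i) The BMO route, which is what the citation of \cite{ASW2010} points to and which the paper itself uses for $N_0$ inside the proof of Lemma \ref{L:Control}: since $\|\Delta u\|_{L^\infty}\le \max\{\lambda_1,\lambda_2\}$ and $\sup_{B_3(x^0)}|u|\le M$, interior estimates give $\|D^2u\|_{\mathrm{BMO}(B_1(x^0))}\le C(M)$; the BMO seminorm is invariant under $x\mapsto x^0+rx$, $u\mapsto u/r^2$, so John--Nirenberg yields $\|D^2u_r-A_r\|_{L^p(B_1)}\le C(M,p)$ uniformly in $r\le 1$, where $A_r:=|B_1|^{-1}\int_{B_1}D^2u_r$. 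Your own (correct) observation that $D^2\Pi(f)$ is the trace-free part of the average of $D^2f$ then finishes the first estimate, because
\[
D^2u_r-D^2\Pi(u_r)=\bigl(D^2u_r-A_r\bigr)+\tfrac{1}{2}\,\mathrm{tr}(A_r)\,\mathrm{Id},
\qquad |\mathrm{tr}(A_r)|\le \max\{\lambda_1,\lambda_2\},
\]
and the $C^{1,\beta}$ estimate follows by Morrey's embedding with $p=2/(1-\beta)$ together with the normalization $w_r(0)=|\nabla w_r(0)|=0$. (ii) If you want to keep the dyadic scheme, the missing ingredient is a genuine contraction, and it comes from spectral decay of the harmonic part, not from $z$: homogeneity of quadratics and linearity of $\Pi$ give $w_{r/2}=4\,w_r(\cdot/2)-\Pi\bigl(4\,w_r(\cdot/2)\bigr)$; expanding the harmonic part of $w_r$ in homogeneous harmonics, the degree $0,1$ components are controlled through the normalization at the origin, the degree $2$ component through $\Pi(w_r)=0$ (up to the bounded trace part), and every component of degree $k\ge 3$ is multiplied by $2^{2-k}\le \tfrac12$ under the halving. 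This yields $\sup_{B_1}|w_{r/2}|\le \tfrac12\sup_{B_1}|w_r|+C(M)$, hence the uniform bound by geometric summation, with no reference to the cross.
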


\begin{lemma}\label{L:Control}
For each $\ep>0, M<+\infty, \alpha\in [1,+\infty)$ and $\beta \in (0,1)$, there exist $r_0, \delta>0$ with the following property: suppose that $0<r \leq r_0$ and that $\psi$ is a solution of \eqref{E:VP-1} satisfying $\sup_{B_2(x_0)} |u| \leq M$, $u(x)=|\nabla u(x)|=0$ and 
\[
\mathcal{L}^2\left( \left(\{u(x+r \cdot)>0 \} \Delta \{x_1x_2>0\} \right) \cap B_1 \right) \leq \delta. 
\] 
Then 
\be
\left\| \frac{u(x+r \cdot)}{r^2} - \Pi \left( \frac{u(x+r \cdot)}{r^2} \right) - z \right\|_{C^{1,\beta}(\overline{B}_1)} \leq \ep. 
\ee
\end{lemma}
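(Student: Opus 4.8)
The plan is to argue by contradiction, combining the uniform $W^{2,p}$ and $C^{1,\beta}$ bounds from the preceding lemma with the global uniqueness in Lemma \ref{L:Gen-pon}. Suppose the assertion fails for some $\ep_0>0$; then, taking $r_0=\delta=1/j$, one obtains base points (translated to the origin), radii $r_j\to 0$ and solutions $u_j$ of \eqref{E:VP-1} with $\sup_{B_2}|u_j|\le M$, $u_j(0)=|\nabla u_j(0)|=0$ and
\[
\mathcal{L}^2\big((\{u_j(r_j\cdot)>0\}\,\Delta\,\{x_1x_2>0\})\cap B_1\big)\le \tfrac1j\to 0,
\]
while $\|w_j-z\|_{C^{1,\beta}(\overline B_1)}>\ep_0$, where $v_j(x):=u_j(r_jx)/r_j^2$ and $w_j:=v_j-\Pi(v_j)$.

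First I would extract a global limit. Since $\Pi(v_j)\in\mathbb{P}$ is harmonic, $-\Delta w_j=\I_{D_j}-2\Omega$ on $B_{2/r_j}$ with $D_j:=\{y:r_jy\in D\}$, and the source is bounded by $1$ in $L^\infty$. Applying the preceding lemma at scale $Rr_j$ for each fixed $R$ (legitimate once $Rr_j\le 1$) controls $v_j$ up to a second-order harmonic polynomial on $B_R$; comparing the quadratic parts at scales $R$ and $1$ through their common bound on $B_1$ yields $\|w_j\|_{C^{1,\beta}(\overline B_R)}\le C(M,\beta)R^2$ uniformly in $j$. A diagonal Arzel\`a--Ascoli argument then produces a subsequence with $w_j\to w_\infty$ in $C^{1,\beta'}_{loc}(\BFR^2)$ and weakly in $W^{2,p}_{loc}$, where $w_\infty\in C^{1,\beta}_{loc}$ satisfies the scale-invariant growth $|w_\infty(x)|\le C(M)|x|^2=o(|x|^3)$.

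Next I would identify $w_\infty$ with $z$ through Lemma \ref{L:Gen-pon}. The $C^1_{loc}$ convergence passes the source to the limit, so $-\Delta w_\infty=\I_{D_0}-2\Omega$ distributionally, and the symmetric-difference hypothesis forces $D_0=\{x_1x_2>0\}$ on $B_1$. The normalizations are immediate: $w_\infty(0)=0$ and $\nabla w_\infty(0)=0$ because $v_j$ and its gradient vanish at the origin and every $p\in\mathbb{P}$ satisfies $p(0)=|\nabla p(0)|=0$; moreover $\Pi(w_j)=0$ since $\Pi$ is a projection onto $\mathbb{P}$ and $\Pi(v_j)\in\mathbb{P}$, so $\Pi(w_\infty)=\lim_j\Pi(w_j)=0$ by continuity of the projection. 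Granting that $D_0=\{x_1x_2>0\}$ on all of $\BFR^2$, the difference $\eta:=w_\infty-z$ is globally harmonic with $\eta(0)=|\nabla\eta(0)|=0$, $\Pi(\eta)=\Pi(w_\infty)-\Pi(z)=0$ and $|\eta(x)|=o(|x|^3)$; hence $\eta$ is a degree-two harmonic polynomial with vanishing projection, i.e. $\eta\equiv0$. This gives $w_j\to z$ in $C^{1,\beta}(\overline B_1)$, contradicting $\|w_j-z\|_{C^{1,\beta}(\overline B_1)}>\ep_0$.

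The hard part is the global identification $D_0=\{x_1x_2>0\}$: the hypothesis only bounds the symmetric difference on $B_1$, whereas the uniqueness in Lemma \ref{L:Gen-pon} is genuinely global and relies on the growth condition (a harmonic function on $B_1$ alone with $\eta(0)=|\nabla\eta(0)|=0$ and $\Pi(\eta)=0$ need not vanish). I would close this gap by a rigidity/propagation argument. Once $\{v_j>0\}$ is $C^{1,\beta}$-close to the cross on $B_1$, non-degeneracy of the relevant cross polynomial $x_1x_2$, whose gradient is nonzero away from the origin, makes $\partial D_j$ a smooth graph off the origin, so the free boundaries converge in Hausdorff distance to $\{x_1x_2=0\}$ there; real-analyticity of $\partial D_0$ where the gradient of the limit is nonzero then continues the cross configuration outward, yielding $D_0=\{x_1x_2>0\}$ globally. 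The uniform control of the quadratic part across scales (the point where the parameter $\alpha$ and the Weiss monotonicity \eqref{E:monoton} enter) is the quantitative ingredient that makes this propagation effective.
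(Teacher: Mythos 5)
Your skeleton --- contradiction, compactness of $w_j := v_j - \Pi(v_j)$, the normalizations $w_\infty(0)=|\nabla w_\infty(0)|=0$ and $\Pi(w_\infty)=0$, subcubic growth, and then killing the harmonic difference $\eta = w_\infty - z$ by Liouville plus Lemma \ref{L:Gen-pon} --- is the same as the paper's. Where you genuinely diverge is the mechanism for the growth bound. The paper does not iterate the preceding lemma across scales; it writes $u_j = N + h$ with $N$ a normalized Newtonian potential of $\Delta u_j$ and $h$ harmonic, Taylor-expands $h$ at $x^j$ with cubic error to get $v_j - \Pi(v_j) = N_j - \Pi(N_j) + o(1)$ on compact sets, and then extracts $N_0(y)=o(|y|^3)$ from the uniform BMO bound on $\mathcal{D}^2 N_0$ (the right-hand side is in $L^\infty$), after which only the uniqueness clause of Lemma \ref{L:Gen-pon} is invoked. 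Your cross-scale iteration could in principle substitute for this, but your claimed bound $\|w_j\|_{C^{1,\beta}(\overline{B}_R)}\le C(M,\beta)R^2$ is false as stated: the projection drifts logarithmically from scale to scale --- this is precisely items (4)--(5) of Lemma \ref{L:Gen-pon}, where $\Pi(z_{1/2})=\frac{\log 2}{\pi}x_1x_2$ while $\Pi(z)=0$ --- so dyadic comparison only yields $CR^2(1+\log R)$. That correction is harmless (still $o(R^3)$, which is all the Liouville step needs), but it should be made, since the $r^2\log r$ term in \eqref{E:Potential} is the entire reason one works modulo $\Pi$.

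The genuine gap is the one you flagged yourself, and your proposed repair does not close it. Analyticity of the free boundary is local and holds only where the gradient of the limit is nonzero; an arc of $\partial D_0$ lying on an axis inside $B_1$ may pass through a further degenerate point of $w_\infty$ on the axis outside $B_1$ and switch branches of the level set there, so ``continuing the cross outward by analytic continuation'' is not a valid deduction --- the hypothesis constrains nothing outside $B_1$, and the lemma quantifies over all solutions. Moreover you conflate $\{u>0\}$ with $D$: in this sign-changing problem the source in \eqref{E:VP-1} is $\I_D-2\Omega$, not $\I_{\{u>0\}}-2\Omega$, and $u$ may be positive in parts of $D^c$, so even Hausdorff convergence of $\partial\{v_j>0\}$ would not by itself identify $\I_{D_j}$, hence not $\Delta w_\infty$. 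The paper sidesteps both problems by building the global information into the contradiction hypothesis: its proof assumes $\mathcal{L}^2\bigl(\{u_j(x^j+r_j\cdot)>0\}\,\Delta\,\{x_1x_2>0\}\bigr)\to 0$ \emph{without} the restriction to $B_1$ (following \cite{ASW2010}), so that the rescaled indicators converge in $L^1_{loc}$ and $N_0$ solves \eqref{E:Poten} globally by construction, with no appeal to free-boundary regularity at all. If you insist on proving the lemma literally as stated, with the measure hypothesis on $B_1$ only, you would need an additional propagation-of-smallness step (e.g.\ through the monotonicity formula \eqref{E:monoton}) to control the configuration on $B_R$ for fixed $R$; your argument does not supply it, and --- as a point of comparison rather than a defect of yours --- neither does the paper's written proof, which simply strengthens the hypothesis at the start of the compactness argument.
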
 
\begin{proof}
Suppose that $r_j \rightarrow 0$ such that 
\[
\mathcal{L}^n \left(\{u(x+r \cdot)>0 \} \Delta \{x_1x_2>0\} \right) \rightarrow 0 \;\; \text{ as } j \rightarrow \infty
\]
and
\[
\frac{u_j(x^j+r_j \cdot)}{r_j^2} - \Pi\left( \frac{u_j(x^j+r_j \cdot)}{r_j^2} \right) \rightarrow \wt z \;\text{ strongly in } C_{loc}^{1,\beta}(\BFR^n) \text{ and  weakly in } W_{loc}^{2,\alpha}(\BFR^n).
\]
Now consider $\wt N$ be the Newtonian potential of $\Delta u_j$, i.e.,
\[
\wt N(y):= \frac{1}{2\pi} \int_{\BFR^2} \log|y - \xi| \Delta u_j(\xi) d \xi. 
\]
Let $N(y) = \wt N(y) - \wt N(x^j) - \nabla \wt N(x^j)(y-x^j)$, then $h(y) = u_j(y) - N(y)$ is a harmonic function. We have 
\[
\left|u_j(y) - N(y) - \mathcal{D}^2 h(x^j)(y-x^j)(y-x^j) \right| \leq C \left|x^j - y \right|^3 \text{ in } B_{\frac{1}{2}}(x^j). 
\]
For the scaled function $v_j(y) = \frac{\psi_j(x^j+r_jy)}{r_j^2},\; N_j(y) = \frac{N(x^j+r_j y)}{r_j^2}$ and $p_j(y) = \mathcal{D}^2h(x^j)y\cdot y$, we obtain 
\[
\left| v_j(y)-N_j(y) -p_j(y) \right| \leq C r_j |y|^3  \text{ in } B_{\frac{1}{2}}(x^j). 
\]
Thus 
\[
v_j - \Pi(v_j) = N_j - \Pi(N_j) + o(1), \;\; j \rightarrow \infty. 
\]
Therefore $N_j$ converges locally to $N_0$, where 
\[
-\Delta N_0 = \mathbf{I}_C - 2 \Omega, \;\; N_0(0)=\nabla N_0(0)=0, \text{ and } N_0 - \Pi(N_0) = \wt z.
\]
Then it suffices to show that $N_0(y)=o(|y|^3)$ as $|y| \rightarrow \infty$ since one would conclude $\wt z = N_0 - \Pi(N_0) = z$ due to the uniqueness result given in Lemma \ref{L:Gen-pon}. Since $\mathcal{D}^2 N_0 \in BMO$ so that 
\[
\int_{B_1} \left| \frac{\mathcal{D}^2\left(N_0(Ry)\right) - \overline{\mathcal{D}^2\left(N_0(R\cdot)\right)}}{\sup_{B_R} \left|\mathcal{D}^2 N_0 \right|} \right|^2 d y \leq C \frac{R^4}{\sup_{B_R} \left|\mathcal{D}^2 N_0 \right|^2}
\]
for all $R \in (0,+\infty)$, where $\overline{\mathcal{D}^2\left(N_0(R\cdot)\right)}$ denotes the mean value of $\mathcal{D}^2(N_0(R\cdot))$ on $B_1$. Therefore  we have a sun-sequence $\frac{N_0(R_k\cdot)}{\sup_{B_{R_k}} \left|\mathcal{D}^2 N_0 \right|}$  converges to a  degree $2$ homogeneous harmonic polynomial as $R_k \rightarrow \infty$. Assume  
\[
\limsup_{|y| \rightarrow \infty} \frac{|N_0(y)|}{|y|^3} >0.
\] 
Since $\Delta(N_0-z) = 0$ and 
\[
\limsup_{|y| \rightarrow \infty} \frac{|N_0(y)-z(y)|}{|y|^3} >0,
\] 
$N_0-z$ must be a harmonic polynomial of degree $m \geq 3$ which is a contradiction.
\end{proof}
Based on Lemma \ref{L:Control}, Lemma \ref{L:Uniform} is proved by the same approach as Theorem A in \cite{ASW2010},  hence we omit it here. \\

Now we are in the position to prove the main theorem.


\begin{proof}[Proof of Theorem \ref{T:main-2}]

Employ Theorem \ref{T:main-1} by letting $\lambda_1=1-2\Omega$ and $\lambda_2=2 \Omega$. Suppose that there is no stagnation point on the boundary, namely $|\nabla \psi| \neq 0$ on $\p D$. Since $\p D$ is a connected component of level set $\{\psi =0\}$ and $\psi \in C^{1,\alpha}$ due to standard elliptic regularity theorem,  $\p D$ is a $C^{1,\alpha}$ curves, which is indeed locally real-analytic due to Theorem \ref{T:main-1}. Since the curve $\p D$ is real-analytic hence $C^\infty$,  $\p D$ is a $C^\infty$ smooth manifold due to the fundamental theorem concerning partition of unity.

For the singular set, by Proposition \ref{C:Singularity-limit} we have that only Type I singular points can occur. Moreover, suppose $x^0 \in S^\psi \subset \p D$, i.e., $\nabla \psi(x_0) = 0$, by Theorem \ref{T:main-1} and Lemma \ref{L:Uniform}, there exists a small parameter $r_0>0$ such that $\{\psi =0 \} \cap B_{r_0}(x_0)$ consists of two $C^1$-curves intersecting each other at right angles at $x^0$, meaning that $x^0$ must be  an isolated singular point.

In the last part, we will show that the singular set is finite, or equivalently the isolatedness of singular points  since $\p D$ is rectifiable. Assume the conclusion fails, i.e., the singular points are not isolated, there would be a sequence of singular points
\[
\{x_j\}_{j=1}^\infty \in S^\psi:=\left\{x \in \p D \; \big|\, \psi(x_j)=|\nabla \psi(x_j)| = 0  \right\},
\]
converging to $x^0 \in \p D$ satisfying $\nabla \psi(x^0)=0$ by taking a subsequence since $S^\psi$ is a closed set. 

We note that $x^0$ must be a singular point, i.e., for any $r>0$, $\p D \cap B_r(x^0)$ can not represented as the graph of a smooth function. Indeed, Theorem \ref{T:main-1} establishes the classification for all critical points on $\p D$ satisfying $\nabla \psi(x^0) =0$. By Proposition \ref{C:Singularity-limit}, only blow-up limit I can occur. Then Lemma \ref{L:Uniform} implies that there exists $r_0>0$ such that $\p D \cap B_{r_0}(x^0)$ is a crossing of two $C^1$-curves thus $x_j \notin \p D \cap B_{r_0}(x^0)$. Therefore,  $x^0$ is a singular point on the boundary, and $x_j \notin \cap B_{r_0}(x^0)$, which contradicts with the assumption $x_j \to x^0$. The proof is then complete.

\end{proof}

{\bf Conflict of interest statement.} On behalf of all authors, the corresponding author states that there is no conflict of interest.

{\bf Data availability statement.} No new data were created or analysed in this article. Data sharing is not applicable to this article.

\bibliographystyle{plain}

\end{document}